\documentclass[reqno]{amsart}
\usepackage{hyperref,amssymb}

\begin{document}
\title[Nonlocal Robin Laplacian]
{Generalized Nonlocal Robin Laplacian on Arbitrary Domains}
\author[N. Ait OUssaid]
{Nouhayla Ait Oussaid}

\author[K. Akhlil]
{Khalid Akhlil}

\author[S. Ben Aadi]
{Sultana Ben Aadi}

\author[M. El Ouali]
{Mourad El Ouali}

\address{\newline Nouhayla Ait Oussaid, Khalid Akhlil, Sultana Ben Aadi, and Mourad El Ouali\newline
Departement of Mathematics and Management\newline
POlydisciplinary Faculty of Ouarzazate\newline
Ibno Zohr University\newline
Agadir, Morocco.}
\email{~\newline nouhayla.aitoussaid@gmail.com\newline
k.akhlil@uiz.ac.ma\newline 
 sultana.benaadi@edu.uiz.ac.ma\newline
 mauros1608@;gmail.com}


\subjclass[2000]{31C25; 31C40;47D06; 34B10}
\keywords{Nonlocal Robin boundary conditions; Sub-Markovian semigroups; Dirichlet forms; Relative capacity }

\begin{abstract}
In this paper, we prove that it is always possible to define a realization of the Laplacian $\Delta_{\kappa,\theta}$ on $L^2(\Omega)$  subject to nonlocal Robin boundary conditions with general jump measures on arbitrary open subsets of $\mathbb R^N$. This is made possible by using a capacity approach to define admissible pair of measures $(\kappa,\theta)$ that allows  the associated form $\mathcal E_{\kappa,\theta}$ to be closable. The nonlocal Robin Laplacian $\Delta_{\kappa,\theta}$ generates a sub-Markovian $C_0-$semigroup on $L^2(\Omega)$ which is not dominated by Neumann Laplacian semigroup unless the jump measure $\theta$ vanishes. Finally, the convergence of semigroups sequences $e^{-t\Delta_{\kappa_n,\theta_n}}$ is investigated in the case of vague convergence and $\gamma-$convergence of admissible pair of measures $(\kappa_n,\theta_n)$.
\end{abstract}

\maketitle \setlength{\textheight}{19.5 cm}
\setlength{\textwidth}{12.5 cm}
\newtheorem{theorem}{Theorem}[section]
\newtheorem{lemma}[theorem]{Lemma}
\newtheorem{proposition}[theorem]{Proposition}
\newtheorem{corollary}[theorem]{Corollary}
\theoremstyle{definition}
\newtheorem{definition}[theorem]{Definition}
\newtheorem{example}[theorem]{Example}
\theoremstyle{remark}
\newtheorem{remark}[theorem]{Remark}
\numberwithin{equation}{section} \setcounter{page}{1}

\newcommand{\hth}{\hat{\theta}}
\newcommand{\Hun}{H^1(\Omega)}
\newcommand{\RR}{\mathbb R}
\newcommand{\ombar}{\overline{\Omega}}
\newcommand{\po}{\partial\Omega}
\newcommand{\me}{a}
\newcommand{\mf}{\mathcal F}
\newcommand{\mfm}{\mathcal F^{\kappa}}
\newcommand{\mem}{a_{\kappa}}
\newcommand{\Huntild}{\widetilde{H}^1(\Omega)}
\newcommand{\pmu}{\mathcal P_t^{\kappa}}
\newcommand{\ma}{\mathcal A}
\newcommand{\emu}{e^{-t\Delta_{\kappa}}}
\newcommand{\eN}{e^{-t\Delta_{N}}}
\newcommand{\eD}{e^{-t\Delta_{D}}}
\newcommand{\rcap}{\mathrm{Cap}_{\ombar}}
\newcommand{\dmu}{\Delta_{\kappa}}
\newcommand{\emup}{e^{-t\Delta_{\kappa^+}}}
\newcommand{\emun}{e^{-t\Delta_{-\kappa^-}}}
 \newcommand{\memp}{a_{\kappa^+}}
 \newcommand{\memn}{a_{-\kappa^-}}
\newcommand{\mfmp}{\mathcal F^{\kappa^+}}
\newcommand{\Capp}{\mathrm{Cap}}


\section{Introduction}

For sufficiently smooth open set $\Omega$, it is always possible to define the realization in $L^2(\Omega)$ of Laplace operator $-\Delta$ with Dirichlet boundary conditions, $u=0$ on $\po$, Neumann boundary conditions, $\partial_\nu u=0$ on $\po$, and Robin boundary conditions, $\partial_\nu u+\beta u=0$ on $\po$. These realizations are the self-adjoint operators on  $L^2(\Omega)$ associated with the closed bilinear forms
\begin{align*}
\mathcal E_D(u,v)=&\int_\Omega\nabla u\,\nabla v\,dx, \qquad u,\,v\in D(\mathcal E_D)=H^1_0(\Omega),\\
\mathcal E_N(u,v)=&\int_\Omega\nabla u\,\nabla v\,dx, \qquad u,\,v\in D(\mathcal E_N)=H^1(\Omega),
and \\
\mathcal E_R(u,v)=&\int_\Omega\nabla u\,\nabla v\,dx+\int_{\po}\beta uv\,d\sigma, \quad u,\,v\in D(\mathcal E_R)=H^1\Omega)
\end{align*} respectively. Here $\beta\in L^\infty(\po)$ is a nonnegative given function and $\sigma$ the surface measure. Each of the self-adjoint operators $\Delta_D$, $\Delta_N$ and $\Delta_R$, associated with the three closed forms respectively, generates a sub-Markovian $C_0-$semigroup on $L^2(\Omega)$. It is worth to mention that Robin boundary conditions generalize Dirichlet and Neumann ones by simply taking $\beta$ as infinity and zero respectively. Even more, Robin boundary conditions can be seen as boundary perturbation of Neumann boundary conditions. Moreover, the domination
\begin{equation}\label{dom1}
0\leq e^{-t\Delta_D}\leq e^{-t\Delta_R}\leq e^{-t\Delta_N},\qquad (t\geq 0)\end{equation} holds in the positive operators sense.

If $\Omega$ is an arbitrary bounded open set, Daners \cite{D00} replaced the surface measure $\sigma$ by the $(N-1)-$dimensional Hausdorff measure $\mathcal H^{N-1}$ and showed a way how to define a self-adjoint realization of the Laplacian which satisfies in a weak sense Robin boundary condition. Arendt and Warma \cite{AW1} developed an alternative approach to define self-adjoint realization of the Laplacian with even more general Robin boundary conditions; i.e., boundary conditions defined by arbitrary Borel measures. For this aim, the authors used systematically the notion of relative capacity; i.e., the capacity induced by the form $\mathcal E(u,v)=\int_\Omega\nabla u\,\nabla v\,dx$ with domain $\mathcal D:=\Huntild$: the completion of $H^1(\Omega)\cap C_c(\ombar)$ in $H^1(\Omega)$. The choice of the domain $\mathcal D$ of $\mathcal E$ is justified by the fact that  the form inducing a capacity needs to be regular; i.e., $D(\mathcal E)\cap C_c(\ombar)$ is dense in $D(\mathcal E)$ and in $C_c(\ombar)$. The second density is true due to the Stone-Weierstrass theorem, but if $D(\mathcal E)$ was equal to $H^1(\Omega)$ then the first density needs not to be true. This is due to a lack of regularity of the domain $\Omega$. So the definition of $\mathcal D$ is a way to "regularize" the form $(\mathcal E,H^1(\Omega))$ and then to be able to take advantage of all the potentialities of the underlying (relative) capacity. 

Now, given a measure $\kappa$, the authors in \cite{AW1} (inspired by \cite{AS, S, SV}) proved the existence of a realization of Laplace operator with general Robin boundary conditions by considering on $L^2(\Omega)$ the positive form $\mathcal E_\kappa$ defined by
\[
\mathcal E_\kappa(u,v)=\int_\Omega\nabla u\nabla v\,dx+\int_{\po} u v\,d\kappa
\]with domain $D(\mathcal E_\kappa)=H^1(\Omega)\cap C_c(\ombar)\cap L^2(\po,d\kappa)$. The closability of the form $(\mathcal E_\kappa, D(\mathcal E_\kappa))$ is acquired if and only if the measure $\kappa$ is admissible; i.e., $\kappa$ charges no set with relative capacity zero (relative polar sets). Moreover, the self-adjoint operator $\Delta_\kappa$ associated with the closure of $(\mathcal E_\kappa, D(\mathcal E_\kappa))$ generates a sub-Markovian $C_0-$semigroup on $L^2(\Omega)$ which is, similarly to \eqref{dom1}, sandwiched between Dirichlet semigroup and Neumann semigroup.

The main goal of the present paper is to prove, by means of the aforementioned capacity approach, the existence of the realization in $L^2(\Omega)$ of the Laplacian with nonlocal Robin boundary conditions involving general jump measures on the boundary.  Given a Borel measure $\kappa$ on $\partial\Omega$ and a symmetric Radon measure $\theta$ on $\partial\Omega\times\partial\Omega\setminus_d$ where $d$ indicates the diagonal of $\partial\Omega\times\partial\Omega$, we define the bilinear symmetric form $\mathcal E_{\kappa,\theta}$ on $L^2(\Omega)$ by

\[
 \mathcal E_{\kappa,\theta}(u,v)=\int_{\Omega}\nabla u\nabla v dx+\int_{\po}uvd\kappa+\int_{\partial\Omega\times\partial\Omega\setminus_d}(u(x)-u(y))(v(x)-v(y))d\theta,
\]whenever the right hand side is meaningful in $\Hun\cap C_c(\ombar)$. The first question to answer in this paper is whether the symmetric positive form $a_{\kappa,\theta}$ is closable. This will be done by introducing the notion of admissible pair of measures; i.e. the measure $\kappa+\hat\theta$ charges no relative polar sets, where $\hat\theta(dx):=\theta(dx,\po)$. This shows that the closablility of the form $a_{\kappa,\theta}$ is inherent to the measure $\hat\theta$ rather than to the measure $\theta$ itself. The associated self-adjoint operator $\Delta_{\kappa,\theta}$ generates a sub-Markovian semigroup which is dominated by Dirichlet Laplacian semigroup but not by Neumann Laplacian semigroup; i.e.
\begin{equation}\label{dom2}
0\leq e^{-t\Delta_D}\leq e^{-t\Delta_{\kappa,\theta}}\nleq e^{-t\Delta_N},\qquad (t\geq 0)\end{equation}
holds in the positive operators sense, which means that the right hand side in the sandwiched property \eqref{dom1} fails. This is due essentially to the nonlocality term expressed with the jump measure $\theta$.

Many authors have investigated local and nonlocal Robin boundary conditions. For local boundary conditions, see e.g. \cite{A15, A12, A18, AW1, AW2, CW12, D00, W02}, where the existence of the realization of Robin Laplacian, regularity, domination results and qualitative properties of the semigroup are the main discussed subjets. The linear and nonlinear nonlocal Robin boundary conditions have captured a particular attention in recent years, see \cite{GM09, SW10, S11, S12}. All previous works about linear nonlocal Robin boundary conditions deal with jump measures of the form
\[\theta(dx,dy)=\frac{1}{|x-y|^{N+2s}}\kappa(dx)\kappa(dy),\quad x,\,y\in\po,\]
while here we consider general symmetric Radon measures $\theta$ which justifies the appellation "general" nonlocal Robin boundary conditions.

The rest of this paper is structured as follows. In Section 2, we recall the concept of relative capacity and its properties and some  results from forms theory. In Section 3, we give a characterization  of the realization on $L^2(\Omega)$ of the Laplacian with general nonlocal Robin boundary conditions. In Section 4, we prove that the self-adjoint operator $\Delta_{\kappa,\theta}$ generates a sub-Markovian $C_0-$semigroup on $L^2(\Omega)$. The results about the domination of the semigroup $e^{-\Delta_{\kappa,\theta}}$ is detailed in Section 5. We conclude the paper by Section 6, where we give some convergence results of semigroups sequences with respect to the admissible pair of measures.

\section{Preliminaries}
\subsection{Relative Capacity}
Let $\Omega$ be an open bounded set of $\mathbb R^N$. The following space is fundamental in the rest of this paper
\[
\Huntild=\overline{\Hun\cap C_c(\ombar)}^{\Hun}
\]where $C_c(\ombar)$ denotes the space of all continuous real-valued functions with compact support in $\ombar$ and $\Hun$ the usual Sobolev space endowed with its usual norm. For conditions implying that $\Hun=\Huntild$ see \cite[Section 1.1.6, Theorem 2]{M85} or \cite[Chap V, Theorem 4.7]{EE90}. In particular, if $\Omega$ has a lipschitz boundary, then $\Hun=\Huntild$. It is worth to mention that the equality does not hold generally, take for example $\Omega=(0,1)\cup(1,2)$

Given an arbitrary subset of $\mathbb R^N$, the \textbf{capacity} $\Capp(A)$ of $A$ is defined as
\begin{align*}
\mathrm{Cap}(A)=\inf\{\|u\|_{H^1(\mathbb R^N)}\,:\, u\in H^1(\mathbb R^N),\,\exists\, O\subset\mathbb R^N\,\text{ open such that }\\ A\subset O \text{ and } u(x)\geq 1 \text{ a.e. on }O\}.
\end{align*}

The \textbf{relative capacity} $\rcap(A)$ (i.e., with respect to the fixed open set $\Omega$) is defined for an arbitrary open set $A$ of $\ombar$ by 

\begin{align*}
\rcap(A)=\inf\{\|u\|_{H^1(\Omega)}\,:\, u\in\Huntild),\,\exists\, O\subset\mathbb R^N\,\text{ open such that }\\ A\subset O \text{ and } u(x)\geq 1 \text{ a.e. on }\Omega\cap O\}.
\end{align*}

We consider the topological space $X=\ombar$, the $\sigma-$algebra $\mathcal B(X)$ of all Borel  sets in $X$, and the measure $m$ on $\mathcal B(X)$ given by $m(A)=\lambda(A\cap \Omega)$ for all $A\in\mathcal B(X)$ with $\lambda$ the Lebesgue measure. Denoting by $L^2(\Omega)$ the usual $L^2-$space with respect to the Lebesgue measure, we then have
\[
L^2(\Omega)=L^2(X,\mathcal B(X),m).
\]
The introduction of $m$ is needed to ensure this identity in the case where $\partial \Omega$ has positive Lebesgue measure.
%
Moreover, we have that $\Huntild\cap C_c(\ombar)$ is dense in $\Huntild$. Thus the Dirichlet form $(\mathcal E,\mathcal D)$ on $L^2(X,\mathcal B(X),m)$ is regular. As introduced in \cite{AW1} the relative capacity of a subset $A$ of $\ombar$ is exactly the capacity of $A$ associated with the Dirichlet form $(\mathcal E,\mathcal D)$ on $L^2(X,\mathcal B(X),m)$ in the sense of \cite[8.1.1, p.52]{BH91}. Note that open subsets of $X=\ombar$ are understood with respect to the topology of $X$. Thus $\rcap$ (just as $\Capp$) has the property of a capacity as described in \cite[I.8]{BH91} or \cite[Chapter 2]{FOT94}.
\begin{definition}
\begin{enumerate}
\item[(1)] A subset $A$ of $\mathbb R^N$ is called polar set if $\Capp(A)=0$. A subset $A$ of $\ombar$ is called relatively polar set if $\rcap(A)=0$.

\item[(2)] We say that a property holds on $\ombar$ relatively quasi-everywhere (r.q.e.), if it holds for all $x\in\ombar\setminus N$ where $N\subset\ombar$ is relatively polar.

\item[(3)] A scalar function $u$ defined on $\mathbb R^N$ is called quasi-continuous if for each $\varepsilon>0$ there exists an open set $G\subset\mathbb R^N$ such that $\rcap(G)<\varepsilon$ and $u$ is continuous on $\mathbb R^N\setminus G$.

\item[(4)] A scalar function $u$ on $\ombar$ is called relatively quasi-continuous, if for each $\varepsilon>0$ there exists an open set $G\subset\mathbb R^N$ such that $\rcap(G\cap \ombar)<\varepsilon$ and $u$ is continuous on $\ombar\setminus G$.

\end{enumerate}

\end{definition}

\begin{remark}
\begin{enumerate}
\item We know that for sets in $\Omega$, the notion of polar sets and relatively polar sets are equivalent. Thus, merely subsets of $\po$ are of interest for our investigations.

\item As $\rcap(A)\leq\Capp(A)$ for any $A\subset\ombar$, it follows that $u_{|_{\ombar}}$ is relatively quasi-continuous. Now, it follows from \cite[Proposition 8.2.1]{BH91} that for each $u\in\Huntild$ there exists a relatively quasi-continuous function $\tilde u:\ombar\rightarrow\mathbb R$ such that $\tilde u(x)=u(x)$ $m-$a.e. We call $\tilde u$ the relatively quasi-continuous representative of $u$. It can be seen from the proof of \cite[I, Proposition 8.2.1]{BH91} that $\tilde u$ can be chosen Borel measurable.

\end{enumerate}

\end{remark}

Let $\Omega\subset\mathbb R^N$ be bounded open set and define
\[
H^1_0(\Omega):=\overline{\mathcal D(\Omega)}^{H^1(\Omega)}
\]where $\mathcal D(\Omega)$ denotes the space of all infinitely differentiable functions with compact support. We end this subsection with a characterization of $H^1_0(\Omega)$ with the help of relative capacity\cite{AW1}, that is
\[
H^1_0(\Omega)=\{u\in\Huntild\,:\, \tilde u(x)=0\, \text{ r.q.e on }\po\}.
\]

\subsection{Dirichlet forms}

Let $H$ be a real Hilbert space. A positive form on $H$ is a bilinear mapping $\mathcal E: D(\mathcal E)\times D(\mathcal E)\rightarrow\mathbb R$ such that $\mathcal E(u,v)=\mathcal E(v,u)$ and $\mathcal E(u,u)\geq 0$ for all $u,\,v\in D(\mathcal E)$ (the domain of the form is a dense subspace of $H$). The form is closed if $D(\mathcal E)$ is complete for the norm $\|u\|_\mathcal E=\left(\mathcal E(u,u)+\|u\|_H^2\right)^{1/2}$. Then the operator $A$ on $H$ associated with $\mathcal E$ is defined by
\begin{equation*}
  \left\{
  \begin{aligned}
D(A):=&\{u\in D(\mathcal E)\,:\,\exists v\in H\,\, \mathcal E(u,\varphi)=(v,\varphi)_H\,\,\forall \varphi \in D(\mathcal E)\},\\
Au:=& v
\end{aligned}
 \right.
\end{equation*}

The operator $A$ is self-adjoint and $-A$ generates a contraction semigroup $(e^{-tA})_{t\geq 0}$ of symmetric operators on $H$. Moreover, the form $\mathcal E$ is called closable if for each Cauchy sequence $(u_n)_{n\in\mathbb N}$ in $(D(\mathcal E),\| \,.\, \|_{\mathcal E})$, $\lim_{n\to\infty} u_n=0$ in $H$ implies $\lim_{n\to\infty} \mathcal E(u_n,u_n)=0$. In that case the closure $\bar{\mathcal E}$ is the unique positive closed form extending $\mathcal E$ such that $D(\mathcal E)$ is dense in $D(\bar{\mathcal E})$.

 Given a positive form $\mathcal E$ there always exists by \cite[Theorem S15, p.373]{RS80} a closable positive form $\mathcal E_r\leq \mathcal E$ such that $\mathcal F\leq \mathcal E_r$ whenever $\mathcal E$ is a closable positive form such that $\mathcal F\leq \mathcal E$. Thus $\mathcal E_r$ is the largest closable form smaller or equal to $\mathcal E$. Clearly $\mathcal E$ is closable if and only if $\mathcal E=\mathcal E_r$.


Now assume that $H=L^2(\Omega)$ where $(\Omega,\Sigma,\lambda)$ is a $\sigma-$finite measure space. We let $L^2(\Omega)_+=\{f\in L^2(\Omega)\,:\,\,\,\, f\geq 0\,\text{ a.e.}\}$ and $F_+=L^2(\Omega)_+\cap F$ if $F$ is a subspace of $L^2(\Omega)$.

\begin{theorem}[First Beurling-Deny criterion]
Let $S$ be the semigroup associated with a closed, positive form $\mathcal E$ on $L^2(\Omega)$. Then $S$ is positive (i.e. $S(t) L^2(\Omega)_+\subset L^2(\Omega)_+$ for all $t\geq 0$) if and only if $u\in D(\mathcal E)$ implies $|u|\in D(\mathcal E)$ and $\mathcal E(|u|)\leq \mathcal E(u)$. 
\end{theorem}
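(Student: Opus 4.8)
The plan is to pass through the resolvent and the approximating (``mollified'') forms attached to the semigroup, treating the two implications separately. Throughout, let $A$ be the nonnegative self-adjoint operator associated with $\mathcal E$, so that $S(t)=e^{-tA}$ and $D(\mathcal E)=D(A^{1/2})$. For $t>0$ I introduce the bounded symmetric forms
\[
\mathcal E^{(t)}(u,v)=\tfrac1t\big((I-S(t))u,v\big)_{L^2(\Omega)}.
\]
By the spectral theorem, $\mathcal E^{(t)}(u,u)=\int_0^\infty \tfrac{1-e^{-t\lambda}}{t}\,d(E_\lambda u,u)$, and since for fixed $\lambda\ge0$ the factor $\tfrac{1-e^{-t\lambda}}{t}$ increases to $\lambda$ as $t\downarrow 0$, the quantity $\mathcal E^{(t)}(u,u)$ increases as $t\downarrow 0$ to $\mathcal E(u,u)$ when $u\in D(\mathcal E)$ and to $+\infty$ otherwise; in particular $u\in D(\mathcal E)$ if and only if $\sup_{t>0}\mathcal E^{(t)}(u,u)<\infty$. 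This monotone characterisation is the engine of both directions.

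For the implication ``$S$ positive $\Rightarrow$ form condition'', I would fix $t>0$ and use that $S(t)$ is a positive self-adjoint operator. Writing $u=u^+-u^-$ and $|u|=u^++u^-$ and expanding, one gets $(S(t)|u|,|u|)-(S(t)u,u)=4\,(S(t)u^+,u^-)\ge 0$, the inequality holding because $S(t)$ maps nonnegative functions to nonnegative functions and $u^\pm\ge 0$. Since $\||u|\|=\|u\|$, this yields $\mathcal E^{(t)}(|u|,|u|)\le \mathcal E^{(t)}(u,u)$ for every $t$. If $u\in D(\mathcal E)$ the right-hand side is bounded by $\mathcal E(u,u)$ uniformly in $t$, so $\sup_t\mathcal E^{(t)}(|u|,|u|)<\infty$, whence $|u|\in D(\mathcal E)$, and letting $t\downarrow 0$ gives $\mathcal E(|u|)\le \mathcal E(u)$.

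For the converse I would use the variational description of the resolvent $R_\lambda=(\lambda+A)^{-1}$: for $\lambda>0$ and $f\in L^2(\Omega)$, $u=R_\lambda f$ is the unique minimiser over $D(\mathcal E)$ of the strictly convex functional $J(v)=\mathcal E(v)+\lambda\|v\|^2-2(f,v)$. Take $f\ge 0$; by hypothesis $|u|\in D(\mathcal E)$, hence $u^+=\tfrac12(|u|+u)\in D(\mathcal E)$ as well. The hypothesis $\mathcal E(|u|)\le\mathcal E(u)$ is equivalent to $\mathcal E(u^+,u^-)\le 0$, which together with $\mathcal E(u^-)\ge0$ gives $\mathcal E(u^+)\le\mathcal E(u)$; moreover $\|u^+\|\le\|u\|$ and, since $f\ge0$ and $u^-\ge0$, $(f,u^+)=(f,u)+(f,u^-)\ge (f,u)$. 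Adding these three estimates yields $J(u^+)\le J(u)$, and uniqueness of the minimiser forces $u^+=u$, i.e. $R_\lambda f\ge 0$. Positivity of the semigroup then follows from the exponential formula $S(t)=\lim_{n\to\infty}\big(\tfrac{n}{t}R_{n/t}\big)^n$, since products and strong limits of positive operators remain positive.

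The part requiring the most care is the spectral step in the first implication: one must justify that a uniform bound on $\mathcal E^{(t)}(|u|,|u|)$ actually places $|u|$ in $D(\mathcal E)=D(A^{1/2})$ and identifies the limit with $\mathcal E(|u|)$. This is exactly the monotone-convergence consequence of the spectral theorem recorded above, and it is what lets one avoid any smoothness or regularity assumption on $\mathcal E$. In the converse, the only subtlety is upgrading $J(u^+)\le J(u)$ to an equality and then to $u^+=u$, which is legitimate precisely because $J$ is strictly convex for $\lambda>0$, so its minimiser is unique.
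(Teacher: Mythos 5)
Your proposal is correct, but there is nothing in the paper to compare it against: the paper states the First Beurling--Deny criterion in its preliminaries \emph{without proof}, as a standard fact from the theory of Dirichlet forms (cf.\ \cite{BH91}, \cite{FOT94}, \cite{D89}), and only invokes it later (Section 4) to get positivity of $e^{-t\Delta_{\kappa,\theta}}$. Your argument is the classical textbook proof, and each step checks out. In the forward direction, the identity $(S(t)|u|,|u|)-(S(t)u,u)=4(S(t)u^+,u^-)\ge 0$ (using symmetry and positivity preservation of $S(t)$), combined with the spectral-theorem fact that $\mathcal E^{(t)}(u,u)=\tfrac1t((I-S(t))u,u)$ increases to $\mathcal E(u,u)$ (or $+\infty$) as $t\downarrow 0$, gives both $|u|\in D(\mathcal E)$ and $\mathcal E(|u|)\le\mathcal E(u)$; the monotonicity you need is legitimate because $\tfrac{1-e^{-t\lambda}}{t}=\lambda\int_0^1 e^{-st\lambda}\,ds$ is decreasing in $t$ for each fixed $\lambda\ge 0$, so the supremum over $t$ coincides with the monotone limit. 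In the converse, the expansion $\mathcal E(|u|)-\mathcal E(u)=4\,\mathcal E(u^+,u^-)$ is valid since $u^{\pm}=\tfrac12(|u|\pm u)\in D(\mathcal E)$ under your hypothesis, so the hypothesis is indeed equivalent to $\mathcal E(u^+,u^-)\le 0$; your three estimates $\mathcal E(u^+)\le\mathcal E(u)$, $\|u^+\|\le\|u\|$ and $(f,u^+)\ge(f,u)$ then force $J(u^+)\le J(u)$, and strict convexity of $J$ (guaranteed by $\lambda>0$) yields $u^+=u$, i.e.\ $R_\lambda f\ge 0$. Passing from positivity of the resolvent to positivity of the semigroup via the exponential formula is also sound; for a self-adjoint semigroup one can note more simply that $\bigl(I+\tfrac{t}{n}A\bigr)^{-n}\to e^{-tA}$ strongly by spectral calculus, and strong limits of positivity-preserving operators are positivity preserving. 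In short, your proof is complete and supplies an argument the paper leaves to the cited literature.
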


\begin{theorem}[Second Beurling-Deny criterion]
Let $S$ be the semigroup associated with a closed, positive form $\mathcal E$ on $L^2(\Omega)$. Assume that $S$ is positive. Then the   asserts that $S$ is $L^\infty-$contractive (i.e., if $f\in L^2(\Omega)$ satisfy $0\leq f\leq 1$ then $0\leq S(t)f\leq 1$ for all $t\geq 0$) if and only $0\leq u\in D(\mathcal E)$ implies $u\wedge 1\in D(\mathcal E)$ and $a(u\wedge 1)\leq \mathcal E(u)$.

\end{theorem}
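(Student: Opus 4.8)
The plan is to pass from the semigroup $S=(S(t))_{t\ge0}$ to its resolvent $R_\lambda=(\lambda+A)^{-1}$, $\lambda>0$, and to exploit the variational description $u=R_\lambda f$ iff $u\in D(\mathcal E)$ and $\mathcal E(u,v)+\lambda(u,v)_{L^2}=(f,v)_{L^2}$ for all $v\in D(\mathcal E)$. Since the order interval $\{f\in L^2(\Omega):0\le f\le1\}$ is closed and convex in $L^2(\Omega)$, since $R_\lambda$ is the Laplace transform of $S$, and since $S(t)f=\lim_{n\to\infty}(\tfrac{n}{t}R_{n/t})^nf$ in $L^2(\Omega)$ by the exponential formula, $S$ is $L^\infty$-contractive if and only if each $\lambda R_\lambda$ maps this interval into itself. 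As positivity is already assumed, $\lambda R_\lambda$ is positive, so the only point to settle is that $0\le f\le1$ forces $\lambda R_\lambda f\le1$.

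For sufficiency I would assume the form condition and test the identity defining $u:=\lambda R_\lambda f$ (with $0\le f\le1$, hence $u\ge0$) against $v:=(u-1)^+=u-u\wedge1$. The hypothesis $u\wedge1\in D(\mathcal E)$ guarantees $v\in D(\mathcal E)$, so $v$ is an admissible test function. Writing $u=(u\wedge1)+v$ and expanding, the inequality $\mathcal E(u\wedge1)\le\mathcal E(u)$ yields $\mathcal E(u\wedge1,v)\ge-\tfrac12\mathcal E(v)$, whence $\mathcal E(u,v)=\mathcal E(u\wedge1,v)+\mathcal E(v)\ge\tfrac12\mathcal E(v)\ge0$. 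On the support of $v$ one has $u>1$, so $\lambda(u,v)_{L^2}\ge\lambda\int v\,dx$, while $f\le1$ gives $\lambda(f,v)_{L^2}\le\lambda\int v\,dx$. Feeding these into $\mathcal E(u,v)+\lambda(u,v)_{L^2}=\lambda(f,v)_{L^2}$ squeezes every inequality to an equality; in particular $\int(u-1)v\,dx=0$, which forces $v=0$, i.e. $\lambda R_\lambda f\le1$.

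For necessity I would use the approximating (Yosida) forms $\mathcal E^{(\lambda)}(w):=\lambda\big(w-\lambda R_\lambda w,\,w\big)_{L^2}$, which increase to $\mathcal E(w)$ as $\lambda\to\infty$ when $w\in D(\mathcal E)$ and to $+\infty$ otherwise. The key lemma is that sub-Markovianity of $\lambda R_\lambda$ propagates to these forms: $\mathcal E^{(\lambda)}(w\wedge1)\le\mathcal E^{(\lambda)}(w)$ for every $0\le w\in L^2(\Omega)$. Granting this, for $0\le u\in D(\mathcal E)$ we get $\mathcal E^{(\lambda)}(u\wedge1)\le\mathcal E^{(\lambda)}(u)\le\mathcal E(u)<\infty$ uniformly in $\lambda$; the uniform bound forces $u\wedge1\in D(\mathcal E)$, and letting $\lambda\to\infty$ gives $\mathcal E(u\wedge1)\le\mathcal E(u)$.

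The main obstacle is this key lemma in the necessity direction. When $\lambda R_\lambda$ is given by a symmetric sub-Markovian kernel $k_\lambda(x,y)$, one symmetrizes to obtain $\mathcal E^{(\lambda)}(w)=\tfrac12\iint k_\lambda(x,y)(w(x)-w(y))^2\,dx\,dy+\int w(x)^2\big(1-\int k_\lambda(x,y)\,dy\big)dx$, after which the estimate is immediate because $w\mapsto w\wedge1$ is a unit contraction: $((w\wedge1)(x)-(w\wedge1)(y))^2\le(w(x)-w(y))^2$ and $(w\wedge1)^2\le w^2$ pointwise. The difficulty is that on a general $L^2(\Omega)$ the resolvent need not admit such a kernel, so one must either invoke an integral representation of symmetric positivity-preserving operators or replace the computation by an abstract order-theoretic argument for the sub-Markovian operator $T=\lambda R_\lambda$; this is the step I expect to require the most care. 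Once the lemma is in hand, both implications close as above.
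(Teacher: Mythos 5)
The paper itself contains no proof of this statement: it is recalled in the preliminaries as the classical second Beurling--Deny criterion, so your attempt can only be judged on its own merits. Your reduction from the semigroup to the resolvents $T_\lambda:=\lambda R_\lambda$ (Laplace transform in one direction, exponential formula plus closedness of the order interval in the other) is correct, and your sufficiency half is complete: with $u=\lambda R_\lambda f$, $0\le f\le 1$, and $v=(u-1)^+\in D(\mathcal E)$, the chain
\[
\lambda(f,v)=\mathcal E(u,v)+\lambda(u,v)\ \ge\ \lambda(u,v)\ \ge\ \lambda\int_\Omega v\,dx\ \ge\ \lambda(f,v)
\]
forces equality throughout, hence $\int_\Omega (u-1)v\,dx=\int_\Omega v^2\,dx=0$ and $v=0$. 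That argument is sound.

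The genuine gap is the key lemma of the necessity direction, and you flagged it yourself: you prove $\mathcal E^{(\lambda)}(w\wedge 1)\le\mathcal E^{(\lambda)}(w)$ only when $T_\lambda$ admits a symmetric integral kernel, which a general sub-Markovian operator on $L^2(\Omega)$ need not have; manufacturing one (e.g.\ a bi-measure representation in the style of Fukushima--Oshima--Takeda) is itself a nontrivial step requiring extra structure. As written, the necessity half is therefore incomplete. The good news is that the lemma has a short abstract proof using nothing but the facts that $T:=T_\lambda$ is self-adjoint, an $L^2$-contraction, and sub-Markovian. Write $w=v+h$ with $v=w\wedge 1$ and $h=(w-1)^+$. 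Expanding and using the symmetry of $T$,
\begin{align*}
(w-Tw,w)-(v-Tv,v) \;=\; 2\,(v-Tv,\,h)\;+\;\bigl(\|h\|_{L^2}^2-(Th,h)\bigr).
\end{align*}
The second term is nonnegative because $\|T\|_{L^2\to L^2}\le 1$. For the first term, $h\ge 0$ vanishes off the set $\{w>1\}$, on which $v=1$; since $0\le v\le 1$ and $T$ is sub-Markovian, $Tv\le 1$ a.e., so
\[
(v-Tv,\,h)=\int_{\{w>1\}}(1-Tv)\,h\,dx\;\ge\;0 .
\]
Multiplying by $\lambda$ gives exactly the missing inequality $\mathcal E^{(\lambda)}(w\wedge 1)\le\mathcal E^{(\lambda)}(w)$ for every $0\le w\in L^2(\Omega)$, with no kernel hypothesis. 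Combined with the monotone convergence of the approximating forms via the spectral representation $\mathcal E^{(\lambda)}(w)=\int_0^\infty\frac{\lambda s}{\lambda+s}\,d(E_s w,w)$ (increasing in $\lambda$, limit finite exactly on $D(\mathcal E)$), this closes the necessity direction along the lines you describe, and your proof becomes complete.
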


We say that the semigroup $S$ is \textbf{sub-Markovian} if it is positive and $L^\infty-$contractive.  A Dirichlet form is a closed positive form satisfyin the two Beurling-Deny criterion.

\begin{remark}
Notice that several authors \cite{D89,BH91,FOT94} call a symmetric Markov semigroup what authors in \cite{AW1, AW2} call a symmetric submarkovian semigroup on $L^2(\Omega)$. In the current paper, we choose the later appellation.
\end{remark}
Now let $\mathcal E$ and $\mathcal F$ be two closed, positive forms on $L^2(\Omega)$ such that the associated semigroup $S$ and $T$ are positive. We say that $D(\mathcal E)$ is an ideal of $D(\mathcal B)$ if
\begin{itemize}
\item[a) ] $u\in D(\mathcal E)$ implies $|u|\in D(\mathcal F)$ and,
\item[b) ] $0\leq u\leq v$, $v\in D(\mathcal E)$, $u\in D(\mathcal F)$ implies $u\in D(\mathcal E)$.
\end{itemize}

\textbf{Ouhabaz's domination criterion} \cite{O96} says that
\[
0\leq S(t)\leq T(t)\quad (t\geq 0)
\]if and only if $D(\mathcal E)$ is an ideal of $D(\mathcal F)$ and 
\[
\mathcal E(u,v)\geq \mathcal F(u,v)\quad \text{for all }u,\,v\in D(\mathcal E)_+.
\]

\section{Closability}

Let $\Omega$ be an open bounded set of $\mathbb R^N$. Let $\kappa$  be a Borel measure on $\partial\Omega$ and $\theta$ a symmetric Radon measure on $\partial\Omega\times\partial\Omega\setminus_d$ where $d$ indicates the diagonal of
$\partial\Omega\times\partial\Omega$. Let
\[
 E=\{u\in H^1(\Omega)\cap C_c(\overline{\Omega}):\int_{\po}|u|^2d\kappa+\int_{\partial\Omega\times\partial\Omega\setminus_d}(u(x)-u(y))^2d\theta<\infty\}
\]

We define the bilinear symmetric form $\mathcal E_{\kappa,\theta}$ with domain $E$ on
$L^2(\Omega)$ by

\[
 \mathcal E_{\kappa,\theta}(u,v)=\int_{\Omega}\nabla u\nabla v dx+\int_{\po}uvd\kappa+\int_{\partial\Omega\times\partial\Omega\setminus_d}(u(x)-u(y))(v(x)-v(y))d\theta
\]

It is natural to ask whether $(a_{\kappa,\theta},E)$ is closable in $L^2(\Omega)$ or not? Remark that if one tries to prove its closability, it will be needed to prove that $u_{n{|_{\po}}}$ converges to zero in $L^2(\Omega)$ and $[u_{n{|_{\po}}}]$ converges also to zero in $L^2(\po\times\po\setminus d)$($[u]$ is defined by $[u](x,y)=u(x)-u(y)$). The following examples show that the aforementioned convergences don't always hold.

\begin{example}Let $\Omega$ be a bounded domain in $\mathbb R^N$ with Lipschitz boundary. Fix $z,\,z'\in\po$ and let $d\kappa=d\delta_z$ and $d\theta=\frac{1}{2}\left(d\delta_z\otimes \delta_{z'}+d\delta_{z'}\otimes \delta_z\right)$, where $\delta_a$ is the Dirac measure in $a$ and $d\delta_z\otimes \delta_{z'}(x,y)=d\delta_z(x)\, d\delta_{z'}(y)$. For such measures we have
\[
\mathcal E_{\kappa,\theta}(u)=\int_\Omega |\nabla u|^2dx+u^2(z)+(u(z)-u(z'))^2
\]
Since $\Hun\cap C_c(\ombar)$ is dense in $\Hun$, there exists $u_n\in \Hun\cap C_c(\ombar)$ such that $u_n\rightarrow 0$ in $\Hun$ as $n\to\infty$ with $u_n(z)=1$ and $u_n(z')=0$ for all $n\geq 1$. For such sequence, we have $\lim_{n\,m\to\infty} a_{\kappa,\theta}(u_n-u_m)=0$ but $\lim_{n\,m\to\infty} \mathcal E_{\kappa,\theta}(u_n)=2$.
\end{example}

\begin{example}
If the jump measure $\theta=0$, then the form $\mathcal E_{\kappa,0}$ is not closable in general. In \cite{AW1}, it is proved that $\mathcal E_{\kappa,0}$ is closable if the measure $\kappa$ is admissible; i.e., for each Borel set $A$, $\rcap(A)=0$ $\Rightarrow$ $\kappa(A)=0$.
\end{example}

\begin{example}
If the jump measure is $\theta(dx,dy)=\frac{1}{|x-y|^{N+2s}}\kappa(dx)\kappa(dy)$, then the form $\mathcal E_{\kappa,\theta}$ is closable if and only if the measure $\kappa$ is admissible, see \cite{CW20}.
\end{example}

The examples show that the form $(\mathcal E_{\kappa,\theta},E)$ is not closable in general, but as it is a positive form, then with means of a Reed-Simon Theorem \cite[Theorem S15, p:373]{RS80},
we know that there exist a closable positive form $(\mathcal E_{\kappa,\theta})_r\leq\mathcal E_{\kappa,\theta}$  such that $\mathcal F\leq(\mathcal E_{\kappa,\theta})_r $ whenever $\mathcal F$ is a closable form such that $\mathcal F\leq \mathcal E_{\kappa,\theta}$. Thus $(\mathcal E_{\kappa,\theta})_r $ is the largest closable form smaller or equal than $\mathcal E_{\kappa,\theta}$. Clearly, $\mathcal E_{\kappa,\theta}$ is closable if and only if $\mathcal E_{\kappa,\theta} =(\mathcal E_{\kappa,\theta})_r $.

Define the Borel measure $\hat{\theta}(dx):=\theta(dx\times\po\setminus d)$. We want to determine $(\mathcal E_{\kappa,\theta})_r$ and in particular characterize when $\mathcal E_{\kappa,\theta}$ is closable. One can see that the closability of the form $\mathcal E_{\kappa,\theta}$ is inherent to the measure $\hat{\theta}$ rather than to the measure $\theta$. We say that a Borel measure $\nu$ on $\po$ is locally infinite everywhere if 
\[
\forall z\in\po\text{ and } r>0\quad \nu(\po\cap B(z,r))=\infty
\]

\begin{proposition}
Let $\kappa$ and $\theta$ be Borel measures on $\po$ and $\po\times\po\setminus d$ respectively. We assume that $\kappa$ or $\hat \theta$ are locally infinite everywhere on $\po$. Then the form $\mathcal E_{\kappa,\theta}$ is closable and its closure which we denote by $\mathcal E_\infty$ is given by
\[
\mathcal E_\infty(u,v)=\int_\Omega\nabla u\,\nabla v\,dx
\]with domain $H^1_0(\Omega)$.

\end{proposition}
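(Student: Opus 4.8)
The plan is to show that under the hypothesis that $\kappa$ or $\hat\theta$ is locally infinite everywhere, the largest closable minorant $(\mathcal E_{\kappa,\theta})_r$ coincides with $\mathcal E_{\kappa,\theta}$ itself, and that its domain collapses to $H^1_0(\Omega)$. The key mechanism is the relative-capacity characterization recalled at the end of Section~2, namely $H^1_0(\Omega)=\{u\in\Huntild:\tilde u=0\text{ r.q.e. on }\po\}$. So the strategy splits naturally into two inclusions: first that the closure has domain contained in $H^1_0(\Omega)$ by forcing boundary traces to vanish, and second that the form restricted to $H^1_0(\Omega)$ is already $\mathcal E_\infty$ and is closed there.

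First I would establish closability by a direct verification of the Cauchy criterion from the Dirichlet-forms subsection. Suppose $(u_n)\subset E$ is $\|\cdot\|_{\mathcal E_{\kappa,\theta}}$-Cauchy with $u_n\to 0$ in $L^2(\Omega)$; I must show $\mathcal E_{\kappa,\theta}(u_n)\to 0$. Because the gradient part is the closable Neumann form, $\int_\Omega|\nabla u_n|^2\,dx$ behaves well, so $(u_n)$ is Cauchy in $\Hun$ and hence converges to $0$ in $\Hun$. The crux is then to argue that the boundary integrals $\int_{\po}|u_n|^2\,d\kappa$ and $\int_{\po\times\po\setminus d}[u_n]^2\,d\theta$ cannot converge to a nonzero limit. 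Here is where local infiniteness enters: if the Cauchy sequence had a nonvanishing boundary trace in the limit, one could localize near a point $z\in\po$ where the trace is bounded away from zero on a ball, and the measure $\kappa$ (or, through $\hat\theta$, the jump measure $\theta$) assigns infinite mass there, forcing $\mathcal E_{\kappa,\theta}(u_n)\to\infty$ rather than staying Cauchy-bounded — a contradiction. The cleanest route is therefore to show that the only way a sequence can stay Cauchy is for the relatively quasi-continuous representatives to tend to $0$ r.q.e. on $\po$.

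To make the domain identification precise I would work with the largest closable form $(\mathcal E_{\kappa,\theta})_r$ and show its domain is exactly $H^1_0(\Omega)$. For the inclusion $H^1_0(\Omega)\subset D$: any $u\in H^1_0(\Omega)$ can be approximated in $\Hun$ by $u_n\in\mathcal D(\Omega)\subset E$ whose boundary traces vanish identically, so the boundary terms are identically zero and $\mathcal E_{\kappa,\theta}(u_n)=\int_\Omega|\nabla u_n|^2\,dx$; this sequence is $\mathcal E_{\kappa,\theta}$-Cauchy and its limit lies in the closure with $\mathcal E_\infty$-value $\int_\Omega|\nabla u|^2\,dx$. For the reverse inclusion I would take $u$ in the domain of the closure, represented as the $\|\cdot\|_{\mathcal E_{\kappa,\theta}}$-limit of some $(u_n)\subset E$, pass to a subsequence whose relatively quasi-continuous representatives converge r.q.e. on $\po$ (using Remark~2(2) and the capacity-convergence tools of \cite{BH91}), and then invoke local infiniteness of $\kappa$ or $\hat\theta$ to conclude that the limiting trace $\tilde u$ must vanish r.q.e. on $\po$, whence $u\in H^1_0(\Omega)$ by the characterization above. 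Finally, since $\mathcal E_\infty$ with domain $H^1_0(\Omega)$ is the classical Dirichlet form, it is already closed, which simultaneously confirms closability and identifies the closure.

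The main obstacle I expect is the rigorous passage from ``the $L^2(\Omega)$-limit is zero'' (or the $\mathcal E$-limit is $u$) to ``the boundary trace vanishes r.q.e.'' This is delicate because convergence in $L^2(\Omega)$ says nothing a priori about the boundary, where $\kappa$ and $\theta$ live; one must upgrade to convergence of quasi-continuous representatives quasi-everywhere along a subsequence and then convert the quantitative local-infiniteness assumption into the qualitative vanishing statement. The decisive technical point is handling $\theta$ through $\hat\theta$: one must show that control of the jump seminorm $\int_{\po\times\po\setminus d}[u]^2\,d\theta$ together with local infiniteness of $\hat\theta$ forces the trace to be (r.q.e.) constant and then zero near each boundary point — this is exactly the sense in which closability is ``inherent to $\hat\theta$ rather than $\theta$,'' and getting this localization argument airtight is the heart of the proof.
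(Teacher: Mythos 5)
Your overall mechanism (local infiniteness forces boundary traces to vanish) is the right one, but you apply it at the wrong place, and this creates a genuine gap. The paper's proof is static: it looks at a \emph{single} element $u\in E$. By the very definition of $E$, the boundary energy $\int_{\po}|u|^2\,d\kappa+\int_{\po\times\po\setminus d}(u(x)-u(y))^2\,d\theta$ is finite, and $u$ is \emph{continuous} on $\ombar$. If $u(z)\neq 0$ for some $z\in\po$, continuity gives $|u|\geq c>0$ on $\po\cap B(z,r)$ for some $r>0$, and then local infiniteness of $\kappa$ (or of $\hat\theta$, after expanding the jump term as $\int_{\po} u^2\,d\kappa+2\int_{\po} u^2\,d\hat\theta-2\int u(x)u(y)\,d\theta$) makes that energy infinite, contradicting $u\in E$. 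Hence \emph{every} $u\in E$ satisfies $u_{|\po}\equiv 0$: the domain $E$ itself collapses to $\{u\in\Hun\cap C_c(\ombar):u_{|\po}=0\}$, the boundary terms are identically zero on $E$, and $\mathcal E_{\kappa,\theta}$ coincides on $E$ with the Dirichlet integral, which is obviously closable. The closure is then identified via $\mathcal D(\Omega)\subset E\subset H^1_0(\Omega)$, the second inclusion being the cited step from Warma's thesis. Once this collapse is observed, your entire Cauchy-sequence analysis of the boundary integrals is moot: there are no boundary integrals left to control.

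The step in your argument that actually fails is the localization applied to the \emph{limit}: ``localize near a point $z\in\po$ where the trace is bounded away from zero on a ball.'' The limit of an $\mathcal E_{\kappa,\theta}$-Cauchy sequence is not continuous; at best it has a relatively quasi-continuous representative, and a quasi-continuous function that is nonzero somewhere need not be bounded below on any set of the form $\po\cap B(z,r)$, whereas local infiniteness is a statement about relative balls only. Worse, this proposition makes \emph{no} admissibility assumption, so $\kappa+\hat\theta$ may well charge relatively polar sets; consequently your planned upgrade from $L^2(\Omega)$-convergence to r.q.e.\ convergence of quasi-continuous representatives says nothing about convergence $\kappa$-a.e.\ or $\hat\theta$-a.e., and the capacity-based tools you invoke (the r.q.c.\ representative, the capacity characterization of $H^1_0(\Omega)$ for the reverse inclusion) cannot be linked to the measures at all. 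Both halves of your plan --- the closability verification and the inclusion of the closure's domain in $H^1_0(\Omega)$ --- rest on this unjustified passage from measure-theoretic to capacity-theoretic smallness. The repair is to run your localization argument where continuity is actually available, namely on the elements of $E$ themselves; that is precisely the paper's proof, and it reduces the proposition to a two-line pointwise observation plus the reference for $E\subset H^1_0(\Omega)$.
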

\begin{proof}
Let $u\in E$. We have
\begin{align*}
&\int_{\po}|u|^2d\kappa+\int_{\partial\Omega\times\partial\Omega\setminus_d}(u(x)-u(y))^2d\theta\\
&=\int_{\po}u^2(x) \kappa(dx)+2\int_{\partial\Omega\times\partial\Omega\setminus_d}u^2(x) \theta(dx,dy)-2\int_{\partial\Omega\times\partial\Omega\setminus_d} u(x)u(y) \theta(dx,dy)\\
&=\int_{\po}u^2(x)\kappa(dx)+2\int_{\po} u^2(x)\int_{\po} \theta(dx,dy)-2\int_{\partial\Omega\times\partial\Omega\setminus_d} u(x)u(y) \theta(dx,dy)\\
&=\int_{\po}u^2(x) \kappa(dx)+2\int_{\po}u^2(x)\hat\theta(dx)-2\int_{\partial\Omega\times\partial\Omega\setminus_d} u(x)u(y) \theta(dx,dy)
\end{align*}
Since $u$ is continuous on $\ombar$, if $\kappa$ or $\hat \theta$ are locally infinite everywhere on $\po$, it then follows that $u_{|_{\po}}=0$ and thus $E=\{u\in H^1(\Omega)\cap C_c(\ombar)\,:\, u_{|_{\po}}=0\}$. One obtains that for all $u,\,v\in E$,
\[
\mathcal E_{\infty}(u,v):=a_{\kappa,\theta}(u,v)=\int_\Omega\nabla u\,\nabla v\,dx
\]It is clear that $(\mathcal E_\infty,E)$ is closable in $L^2(\Omega)$. It is still to be proved that $E\subset H^1_0(\Omega)$. To do so it suffices to follows line by line the second part of the proof of \cite[Proposition 3.2.1]{W02}.

\end{proof}

Let
\[
 \Gamma_{\kappa,\theta}:=\{z\in\po:\exists r>0\text{ such that } \kappa(\po\cap B(z,r))+\hat{\theta}(\po\cap B(z,r))<\infty\}
\]
be the part of $\po$ on which $\kappa$ and $\hat{\theta}$ are locally finite. It is easy to verify that  $\tilde u=0$ on $\po\setminus \Gamma_{\kappa,\theta}$ for all $u\in E$. Since $\Gamma_{\kappa,\theta}$ is a locally compact metric space, it follows from \cite[Theorem 2.18 p.48]{R66} that $\kappa$ and $\hat\theta$ are regular Borel measures on $\Gamma_{\kappa,\theta}$. Therefore $\kappa$ and $\hat \theta$ are Radon measures on $\Gamma_{\kappa,\theta}$. We introduce the notion of admissible pair of measures. We say that \textit{the pair $(\kappa,\theta)$ is admissible} if for each Borel set $A\subset\Gamma_{\kappa,\theta}$ one has
\[
 \rcap(A)=0\Rightarrow (\kappa+\hat{\theta})(A)=0
\]where $\rcap(A)$ is the relative capacity of the subset $A$.

The following Lemma is an adapation of \cite[Lemma 4.1.1]{Br}, and
will be used to prove Theorem \ref{th:1}.

\begin{lemma}\label{lem:1}
 Let $B$ be a Borel set of $\po$ such that $\hat{\theta}(B)>0$. Then there exists disjoint compact sets $K$ and $C$ such that $K\subset B$ and $\theta(K\times C)>0$
\end{lemma}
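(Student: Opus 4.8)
The plan is to exploit inner regularity of the Radon measure $\theta$ to extract a compact set of positive measure that is uniformly separated from the diagonal $d$, and then to split its two coordinate projections geometrically so as to force disjointness while retaining positive mass. I would carry this out in three stages: producing the separated compact set, choosing the projections, and verifying disjointness.

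First I would reduce to a piece bounded away from the diagonal. Since $\Omega$ is bounded, $\po$ is compact, so each $F_n:=\{(x,y)\in\po\times\po:|x-y|\geq 1/n\}$ is a compact subset of $\po\times\po\setminus d$; as $\theta$ is Radon it is finite on each $F_n$, and the $F_n$ increase to $\po\times\po\setminus d$. By the definition $\hat{\theta}(B)=\theta((B\times\po)\setminus d)$, continuity from below gives $\theta((B\times\po)\cap F_n)\uparrow\hat{\theta}(B)>0$, so I fix $n$ (and set $\delta:=1/n$) with $0<\theta((B\times\po)\cap F_n)<\infty$. On this Borel set of finite measure, inner regularity of $\theta$ yields a compact $L\subset(B\times\po)\cap F_n$ with $\theta(L)>0$. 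By construction every $(x,y)\in L$ satisfies $x\in B$ and $|x-y|\geq\delta$, i.e. $L$ is separated from $d$.

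Next I would separate the projections. The set $\pi_1(L)$ is compact and contained in $B$, so I cover it by finitely many closed balls $\overline{B(z_i,\delta/4)}$ with centres $z_i\in\pi_1(L)$. Writing $L_i:=L\cap(\overline{B(z_i,\delta/4)}\times\po)$, one has $L=\bigcup_i L_i$, hence some index $i$ satisfies $\theta(L_i)>0$; I fix it and set $K:=\pi_1(L_i)$ and $C:=\pi_2(L_i)$, both compact with $K\subset B$. Since $L_i\subset K\times C$ we obtain $\theta(K\times C)\geq\theta(L_i)>0$, the required lower bound.

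It remains to check disjointness, and this is the step demanding the geometric care. Every $x\in K$ lies in $\overline{B(z_i,\delta/4)}$, so $|x-z_i|\leq\delta/4$; and for any $y\in C$ there is $(x',y)\in L_i$ with $|x'-z_i|\leq\delta/4$ and $|x'-y|\geq\delta$, whence $|y-z_i|\geq 3\delta/4$. Therefore $|x-y|\geq\delta/2$ for all $x\in K$ and $y\in C$, so $K\cap C=\emptyset$. I expect the genuine obstacle to be only the first stage, namely guaranteeing a compact positive-measure slice off the diagonal even when $\hat{\theta}(B)=\infty$; the exhaustion by the $F_n$ together with finiteness of $\theta$ on compacts resolves this, after which the ball-covering argument is routine.
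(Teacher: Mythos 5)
Your proof is correct and follows essentially the same route as the paper's: inner regularity of the Radon measure $\theta$ produces a compact set of positive mass bounded away from the diagonal, a finite cover plus pigeonhole isolates a positive-mass piece, and the metric separation from the diagonal forces disjointness via the triangle inequality. The only differences are technical refinements — you first exhaust $\po\times\po\setminus d$ by the sets $F_n$ to secure a finite-measure set before invoking inner regularity (a point the paper glosses over when $\hat{\theta}(B)=\infty$), and you cover only the first projection by small balls, whereas the paper covers both coordinates by small product neighborhoods $K_{x_0}\times K_{y_0}$.
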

\begin{proof}
By definition of positive Radon measures, $\theta$ is inner regular. Let $B$ be a Borel set of $\po$ such that $\hat\theta(B)=\theta(\po\times B)>0$. It is then possible to choose a set $S\subset\po\times B\setminus d$ which is compact as a subset of $\po\times\po\setminus d$ and therefore compact as a subset of $\po\times\po$ such that $\theta(S)>0$. 

We choose a metric $\rho_0$ which is compatible with the topology of $\po$. Then the metric $\rho$ on $\po\times\po$, given by
\[
 \rho( (x,y);(x',y') ):=\rho_0(x,x')+\rho_0(y,y'),\quad \forall x,x',y,y'\in\po
\]is compatible with the product topology on $\po\times\po$.

Since $S$ is compact and the diagonal $d$ is closed we can choose a real number $a$ such that $0<2a<dist(S,d)$.

Since $\po$ is relativly compact each $x\in\po$ has a compact neighborhood $K_x$ with diameter less that $a$. Since $S$ can be covered by finitly many sets of the form $K_x\times K_y$ and $\theta(S)>0$, we can choose $(x_0,y_0)\in\po\times\po$ such that $\theta(S\cap K_{x_0}\times K_{y_0})>0$.

We claim that $K_{x_0}\cap K_{y_0}=\emptyset$. Suppose that there exist $z\in K_{x_0}\cap K_{y_0}$. Choose $(x',y')\in S\cap K_{x_0}\times K_{y_0}$. Then 
\begin{align*}
\rho((z,z);(x',y'))&= \rho_0(z,x')+\rho_0(z,y')\\
&\leq \quad a\quad+\quad a\\
&\leq 2a 
\end{align*}
This contradicts the fact that $dist(d,K_0)>2a$. 

Now set $C:=K_{x_0}$ and $K:=K_{y_0}\cap\{y\in\po:\exists x\in\po\text{ with }(x,y)\in S\}$. Then $C$ and $K$ are disjoint compact sets and $K \subset B$. Moreover $\theta(K\times C)>0$ since $K\times C\supset K_{x_0}\times K_{y_0}\cap K_0$.

\end{proof}

\begin{theorem}\label{th:1}
 The form $\mathcal E_{\kappa,\theta}$ is closable if and only if the pair $(\kappa,\theta)$ is admissible.
\end{theorem}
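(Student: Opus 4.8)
The plan is to prove the two implications separately, the forward one (admissibility $\Rightarrow$ closability) directly and the converse by contraposition.

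For the implication that admissibility forces closability, I would start from a sequence $(u_n)\subset E$ which is Cauchy for $\|\cdot\|_{\mathcal E_{\kappa,\theta}}$ and satisfies $u_n\to 0$ in $L^2(\Omega)$. Since the three summands defining $\mathcal E_{\kappa,\theta}$ are nonnegative, the Cauchy condition splits into $\|\nabla(u_n-u_m)\|_{L^2(\Omega)}\to 0$, $\int_{\po}(u_n-u_m)^2\,d\kappa\to 0$, and $\int([u_n]-[u_m])^2\,d\theta\to 0$. The first of these together with $u_n\to0$ in $L^2(\Omega)$ gives $u_n\to 0$ in $\Hun$; passing to a subsequence, the relatively quasi-continuous representatives satisfy $\tilde u_n\to 0$ r.q.e.\ on $\ombar$, since convergence in the Dirichlet-form norm of $(\mathcal E,\mathcal D)$ yields q.e.\ convergence of a subsequence \cite{BH91}. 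Because $\tilde u=0$ on $\po\setminus\Gamma_{\kappa,\theta}$ for every $u\in E$ and, on $\Gamma_{\kappa,\theta}$, admissibility says $\kappa+\hth$ charges no relatively polar set, the exceptional set carries no $(\kappa+\hth)$-mass, so $\tilde u_n\to 0$ $(\kappa+\hth)$-a.e.\ on $\po$.

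It then remains to upgrade this pointwise statement to convergence of the two boundary integrals. The sequence $(u_n)$ is Cauchy in $L^2(\po,d\kappa)$ and its limit, being an a.e.-limit along the subsequence, must vanish; hence $\int_{\po}u_n^2\,d\kappa\to 0$. For the jump term I would argue that the Borel set $N:=\{x\in\po:\tilde u_n(x)\not\to 0\}$ is relatively polar, so $\hth(N)=0$, and then, by symmetry of $\theta$ together with $\hth(dx)=\theta(dx\times\po)$, one gets $\theta(\{(x,y):x\in N\ \text{or}\ y\in N\})\le 2\hth(N)=0$; consequently $[u_n]\to 0$ $\theta$-a.e., and since $([u_n])$ is Cauchy in $L^2(\theta)$ its limit is again $0$, giving $\int([u_n])^2\,d\theta\to0$. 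Combining the three pieces yields $\mathcal E_{\kappa,\theta}(u_n)\to 0$, i.e.\ closability.

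For the converse I would argue contrapositively: assuming $(\kappa,\theta)$ is not admissible, I fix a Borel set $A\subset\Gamma_{\kappa,\theta}$ with $\rcap(A)=0$ and $(\kappa+\hth)(A)>0$, and treat the cases $\kappa(A)>0$ and $\hth(A)>0$. In the first case I choose, by inner regularity, a compact $K\subset A$ with $\kappa(K)>0$; in the second I apply Lemma \ref{lem:1} to obtain disjoint compact sets $K\subset A$ and $C$ with $\theta(K\times C)>0$. Since $\rcap(K)\le\rcap(A)=0$, I can construct $u_n\in E$, supported in a fixed relatively compact neighborhood of $K$ contained in $\Gamma_{\kappa,\theta}$ (and disjoint from $C$ in the second case), with $0\le u_n\le1$, $\|u_n\|_{\Hun}\to0$, $u_n\equiv1$ on a shrinking neighborhood of $K$ and $u_n\equiv0$ near $C$. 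These satisfy $u_n\to0$ in $L^2(\Omega)$ and $\sup_n\mathcal E_{\kappa,\theta}(u_n)<\infty$ (the measures are finite on the chosen neighborhood), while $u_n\equiv1$ on $K$ forces $\int_{\po}u_n^2\,d\kappa\ge\kappa(K)$ in the first case and $\int([u_n])^2\,d\theta\ge\theta(K\times C)$ in the second.

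The delicate point is that this sequence need not be Cauchy for $\mathcal E_{\kappa,\theta}$, so I cannot contradict the definition of closability directly. I would resolve this by a weak-compactness argument: if $\mathcal E_{\kappa,\theta}$ were closable, its closure would be a closed form on a Hilbert space continuously embedded in $L^2(\Omega)$, in which $(u_n)$ is bounded; a subsequence converges weakly there, necessarily to its $L^2$-limit $0$, so by Mazur's lemma suitable convex combinations $v_n$ converge to $0$ in the form norm, whence $\mathcal E_{\kappa,\theta}(v_n)\to 0$. But each $v_n$ is again $\equiv1$ on $K$ (and $\equiv0$ on $C$), so the lower bounds above persist and give $\mathcal E_{\kappa,\theta}(v_n)\ge\kappa(K)>0$ (resp.\ $\ge\theta(K\times C)>0$), a contradiction. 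This Mazur step, which converts the energy concentrated on a relatively polar set into a genuine obstruction to closability, is the part I expect to require the most care; the remaining ingredients are standard capacity theory and the explicit sequences furnished by $\rcap(K)=0$ and Lemma \ref{lem:1}.
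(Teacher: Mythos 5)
Your proposal is correct, and the forward implication is exactly the paper's argument: split the form-Cauchy condition, extract a subsequence converging r.q.e., use admissibility to get $(\kappa+\hth)$-a.e.\ convergence (with the observation, which you make and the paper makes implicitly, that the exceptional set lies in $\Gamma_{\kappa,\theta}$ since every $u\in E$ vanishes on $\po\setminus\Gamma_{\kappa,\theta}$), and then identify the $L^2(\kappa)$- and $L^2(\theta)$-limits as zero via the Cauchy property.

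In the converse your skeleton is also the paper's --- contraposition, Lemma \ref{lem:1}, test functions built from $\rcap(K)=0$ and cut off near $K$ --- but your endgame differs in two ways, both of which buy simplifications. First, the paper does not make its functions $w_i$ vanish on $C$; it separates $K$ from $C$ afterwards with an auxiliary $f\in\Hun\cap C_c(\ombar)$ ($f\ge 1$ on $K$, $|f|\le\frac{1}{4}$ on $C$), at the price of invoking the Dirichlet-algebra inequality $\|fw\|_{\kappa,\theta}\le\|f\|_\infty\|w\|_{\kappa,\theta}+\|w\|_\infty\|f\|_{\kappa,\theta}$ of Bouleau--Hirsch to keep $\|fw_i\|_{\kappa,\theta}$ bounded; you build ``$\equiv 0$ near $C$'' directly into the cutoff, which eliminates both $f$ and that inequality. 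Second, where the paper takes Ces\`aro means $h_n=\frac{1}{n}\sum_{i\le n}fw_i$ and selects a subsequence so that $(h_n)$ is form-Cauchy (in effect a Banach--Saks argument in the abstract completion), you assume closability, work in the Hilbert space $D(\overline{\mathcal E}_{\kappa,\theta})$, and use weak compactness plus Mazur's lemma; the two convexity devices are interchangeable, and both rest on the same key fact that convex combinations remain $\equiv 1$ on $K$ (and, in your version, $\equiv 0$ on $C$), so the energy lower bound $\kappa(K)$ or $\theta(K\times C)$ survives and contradicts $\mathcal E_{\kappa,\theta}(v_n)\to 0$. You also treat the case $\kappa(B)>0$ explicitly by inner regularity on $\Gamma_{\kappa,\theta}$, where the paper only asserts it is ``proved simultaneously''. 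The one step you should spell out is the existence of your fixed cutoff: since $\Gamma_{\kappa,\theta}$ is relatively open in $\po$ and $K$, $C$ are disjoint compacts with $K\subset\Gamma_{\kappa,\theta}$, one can pick $v\in\mathcal D(\mathbb R^N)$ with $v=1$ near $K$ and $\mathrm{supp}(v)\cap\po$ a compact subset of $\Gamma_{\kappa,\theta}\setminus C$; this is the same device as the paper's cutoffs $v_i$, and it is what makes $\kappa$ and $\hth$ finite on the supports, hence $\sup_n\mathcal E_{\kappa,\theta}(u_n)<\infty$.
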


\begin{proof}
$(\Leftarrow)$ 
Let $u_k\in E$ be such that
$u_k\rightarrow 0$ in $L^2(\Omega)$ and $\lim\limits_{n,k\to\infty}\mathcal E_{\kappa,\theta}(u_n-u_k,u_n-u_k)=0$. It is clear that $u_k\rightarrow 0$ in $H^1(\Omega)$. By \cite[Theorem 2.1.3]{W02} applied to the relative capacity, the sequence $(u_k)$ contains a subsequence which converges to zero r.q.e. on $\ombar$. Since $\kappa+\hth$ charges no set of zero relative capacity, it follows that ${u_k}_{|\po}\rightarrow 0\text{ }\kappa$ a.e. and $\hth$ a.e. Since $u_k$ is a Cauchy sequence in $L^2(\po,\kappa)$, it follows that $u_k\rightarrow 0$ in $L^2(\po,\kappa)$.
 Since  ${u_k}_{|\po}\rightarrow 0\text{ }\hth$ a.e, thus $u_k(x)-u_k(y)\rightarrow 0\text{ }\theta$ a.e. Since $u_k(x)-u_k(y)$ is a Cauchy sequence in $L^2(\po\times\po\setminus d,\theta)$, it follows that  $u_k(x)-u_k(y)\rightarrow 0$ in $L^2(\po\times\po\setminus d,\theta)$ and thus $\lim\limits_{k\to\infty} \mathcal E_{\kappa,\theta}(u_k,u_k)=0$, which means that the form $(\mathcal E_{\kappa,\theta},E)$ is closable.

$(\Rightarrow)$ Suppose that there exists a Borel set $B$ such that
$\rcap(B)=0$ and $\hth(B)>0$ or $\kappa(B)>0$. We show that
$\mathcal E_{\kappa,\theta}$ is not closable. We consider just the case where $\hth(B)>0$.  The case where $\kappa(B)>0$ can be proved
simultaneously.

By Lemma \ref{lem:1} we can choose disjoint compact sets $K$ and $C$ such that $\theta(C\times K)>0$ and $K\subset B$(and therefore
$\rcap(K)=0$).

Since $\rcap(K)=0$, by \cite[Theorem 2.2.4]{W02}, there exists a
sequence $u_k\in H^1(\Omega)\cap C_c(\ombar)$ such that
\[
 0\leq u_k\leq 1,\quad u_k=1\text{ on }K\text{ and }\|u_k\|_{H^1(\Omega)}\to 0
\]

Let $(A_i)$ be a seqence of relatively open sets with compact
closure satisfaying
\[
 K\subset \overline{A_{i+1}}\subset A_i\subset\po\text{ and }\bigcap_i\overline{A_i}=K
\]

There exists then a sequence $v_i\in\mathcal D(\mathbb R^N)$ such
that $\mathrm{supp}[v_i]\subset A_i,\text{ }v_i=1\text{ on }K$ and
$0\leq v_i\leq 1$. Clearly, ${v_i}_{|\Omega}\in H^1(\Omega)\cap
C_c(\ombar)$ and $\|u_kv_i\|_{H^1(\Omega)}\to 0$ as $k\to\infty$.
For all $i\geq 1$ we have $u_kv_i\in H^1(\Omega)\cap C_c(\ombar)$.
Moreover, for all $i,k,$ $0\leq u_kv_i\leq 1$ and $u_kv_i=1$ on $K$.
For all $i\geq 1$, we choose $k_i\in \mathbb N$ such that
$\|u_{k_i}v_i\|_{H^1(\Omega)}\leq\frac{1}{2^i}$. Let
$w_i=u_{k_i}v_i$. Then $w_i\to 0$ in $H^1(\Omega)$, $0\leq w_i\leq
1$ and $w_i=1$ on $K$. Moreover $w_i\to\chi_K$ pointwise, since
$\mathrm{supp}[w_i]\subset A_i$.

One has  $\quad\sup_i\|w_i\|^2_{\kappa,\theta}<\infty$. In fact
\[\sup_i\int (w_i(x)-w_i(y))^2d\theta \leq 2\sup_i\int w_i^2(x)d\hat{\theta}\leq 2\hat{\theta}(K)<\infty
 \]
and\[ \sup_i\int w_i^2(x)d\kappa\leq\kappa(K)<\infty
\]
Since $K$ and $C$ are disjoint compact sets of $\po$ and
$H^1(\Omega)\cap C_c(\ombar)$ is dense in $
(C_c(\ombar),\|.\|_{\infty}) $ , we can choose $f\in H^1(\Omega)\cap
C_c(\ombar)$ such that $f(x)\geq 1$ for all $x\in K$ and
$|f(x)|\leq\frac{1}{4}$ for all $x\in C$. Since $\mathcal E_{\kappa,\theta}$ is sub-Markovian, its closure is a Dirichlet form. Thus, by \cite[Theorem
1.4.2 p:14]{BH91}
\[
 \|uv\|_{\kappa,\theta}\leq\|u\|_{\infty}\|v\|_{\kappa,\theta}+\|v\|_{\infty}\|u\|_{\kappa,\theta}\quad\forall u,v\in D(\mathcal E_{\kappa,\theta})
\]

We deduce that $\sup_i\|fw_i\|_{\kappa,\theta}<\infty$. Define
$h_n=\frac{1}{n}\sum_{i=1}^nfw_i$. Thus, selecting a subsequence if
necessary, we may assume that $(h_n)_{n\geq 0}$ is a Cauchy sequence
in $(\mathcal E_{\kappa,\theta},D(\mathcal E_{\kappa,\theta}))$, and therefore convergent in
$L^2(\Omega)$. Since $h_n\to 0$ a.e. we have $h_n\to 0$ in
$L^2(\Omega)$. Since $\mathcal E_{\kappa,\theta}$ is closable this implies that
$\mathcal E_{\kappa,\theta}(h_n,h_n)\to 0$.

In the other hand and  by the choice of $f$, we can choose $n_0$
large enough such that, for any $n\geq n_0$ we have $h_n(x)\geq
\frac{3}{4},\forall x\in K$ and $h_n(y)\leq\frac{1}{2},\forall y\in
C$. Thus
\[
 \limsup\limits_{n\to\infty}\iint(h_n(x)-h_n(y))^2d\theta\geq\frac{1}{16}\theta(K\times C)>0
\]
and therefore
$\limsup\limits_{n\to\infty}\mathcal E_{\kappa,\theta}(h_n,h_n)>0$. the
existence of such sequence contradicts the closability of
$\mathcal E_{\kappa,\theta}$.

\end{proof}

Let $\kappa$ and $\theta$ be two Borel measures on $\po$ and $\po\times\po\setminus d$ respectively such that the pair $(\kappa,\theta)$ is admissible. By definition, the domain of the closure of the form $(\mathcal E_{\kappa,\theta},E)$ is the completion of $E$ with respect to the $\mathcal E_{\kappa,\theta}-$norm. The following result gives its characterization. Before, note that, throughout the following, for $u\in \Huntild$, we will always choose the r.q.c. version $\tilde u$ which is Borel measurable. The proof of the following theorem is much the same as the similar result in \cite{W02} or in \cite{AW1}.

\begin{theorem}
 Let $\kappa$ (resp. $\theta$) be a Radon measure on $\po$ (resp. $\po\times\po\setminus_d$). Suppose that the pair $(\kappa,\theta)$ is admissible. Then the closure of $(\mathcal E_{\kappa,\theta},E)$ is given by
\begin{equation*}
 \mathfrak D=\{u\in \Huntild:\int_{\po}|\widetilde{u}|^2d\kappa+\frac{1}{2}\int_{\partial\Omega\times\partial\Omega\setminus_d}(\widetilde{u}(x)-\widetilde{u}(y))^2d\theta<\infty\}
\end{equation*}
\begin{equation*}
 \mathcal E_{\kappa,\theta}(u,v)=\int_{\Omega}\nabla u\nabla v dx+\int_{\po}\widetilde{u}\widetilde{v}d\kappa+\frac{1}{2}\int_{\partial\Omega\times\partial\Omega\setminus_d}(\widetilde{u}(x)-\widetilde{u}(y))(\widetilde{v}(x)-\widetilde{v}(y))d\theta
\end{equation*}
where $\widetilde{u}$ is a relative quasi-continuous representation
of $u$.

\end{theorem}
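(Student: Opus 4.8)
The plan is to identify the closure by a sandwiching argument: I will show that $(\mathcal E_{\kappa,\theta},\mathfrak D)$ is itself a closed form, that $E\subseteq\mathfrak D$ with $\mathcal E_{\kappa,\theta}$ taking the same value on $E$ under both descriptions, and finally that $E$ is dense in $(\mathfrak D,\|\cdot\|_{\mathcal E_{\kappa,\theta}})$. Since the closure of $(\mathcal E_{\kappa,\theta},E)$ is by definition the smallest closed form extending it in which $E$ sits densely, these three facts force the closure to equal $(\mathcal E_{\kappa,\theta},\mathfrak D)$. The inclusion $E\subseteq\mathfrak D$ is immediate: for $u\in E\subset\Hun\cap C_c(\ombar)$ the relatively quasi-continuous representative $\tilde u$ may be taken equal to the continuous function $u$, so the finiteness conditions defining $\mathfrak D$ collapse to those defining $E$.

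For closedness of $(\mathcal E_{\kappa,\theta},\mathfrak D)$ I take a sequence $(u_n)\subset\mathfrak D$ that is Cauchy for $\|\cdot\|_{\mathcal E_{\kappa,\theta}}$. Controlling the Dirichlet part and the $L^2(\Omega)$ part shows $(u_n)$ is Cauchy in $\Hun$, hence $u_n\to u$ in $\Hun$ with $u\in\Huntild$ (a closed subspace); likewise $\tilde u_n$ is Cauchy in $L^2(\po,\kappa)$ and $[\tilde u_n]$ is Cauchy in $L^2(\po\times\po\setminus d,\theta)$, where $[w](x,y)=w(x)-w(y)$, so these converge to some $g$ and $h$. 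To identify the limits I invoke the relative-capacity version of the convergence theorem (\cite[Theorem 2.1.3]{W02} applied to $\rcap$): $\Hun$-convergence yields a subsequence with $\tilde u_n\to\tilde u$ r.q.e. on $\ombar$. Admissibility of $(\kappa,\theta)$ means $\kappa+\hth$ charges no relatively polar set, so $\tilde u_n\to\tilde u$ both $\kappa$-a.e. and $\hth$-a.e.; since $\theta(N\times\po)=\theta(\po\times N)=\hth(N)$ for any $\hth$-null $N$, this upgrades to $[\tilde u_n]\to[\tilde u]$ $\theta$-a.e. Comparing with the $L^2$-limits gives $g=\tilde u$ $\kappa$-a.e. and $h=[\tilde u]$ $\theta$-a.e., so $u\in\mathfrak D$ and $u_n\to u$ in $\|\cdot\|_{\mathcal E_{\kappa,\theta}}$. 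The same identification, applied to a Cauchy sequence drawn from $E$, simultaneously yields $\text{(closure of }E)\subseteq\mathfrak D$ and the stated integral formula for the limiting form.

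It remains to prove density of $E$ in $\mathfrak D$, which I expect to be the main obstacle. Given $u\in\mathfrak D$ I first reduce to the bounded case: the truncations $u^{(k)}=(-k)\vee u\wedge k$ satisfy $|\tilde u^{(k)}|\le|\tilde u|\in L^2(\kappa)$ and $|[\tilde u^{(k)}]|\le|[\tilde u]|\in L^2(\theta)$, converge pointwise, and hence converge in $\|\cdot\|_{\mathcal E_{\kappa,\theta}}$ by dominated convergence — legitimate even for infinite $\kappa,\theta$ precisely because the dominating functions already lie in the relevant $L^2$-spaces. For bounded $u$ I use $u\in\Huntild$ to choose $u_n\in\Hun\cap C_c(\ombar)$ with $u_n\to u$ in $\Hun$, truncated at the level $\|u\|_\infty$; a subsequence converges r.q.e., hence $\kappa$- and $\hth$-a.e., so $u_n\to\tilde u$ $\kappa$-a.e. and $[u_n]\to[\tilde u]$ $\theta$-a.e. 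The delicate point — the heart of the theorem — is the convergence of the jump term $[u_n]\to[\tilde u]$ in $L^2(\theta)$: near the diagonal $\theta$ may be infinite, so the crude bound $|[u_n]|\le 2\|u\|_\infty$ is not $\theta$-integrable and a bare dominated-convergence argument fails. I would resolve this by producing a fixed $\theta$-integrable majorant, equivalently by establishing uniform $\theta$-integrability of $\big([u_n]^2\big)$, exploiting that the continuous approximants inherit from $u$ the smallness of their oscillation across the diagonal; the $\kappa$-integral is then routine (trivial when $\kappa$ is finite on $\po$), and the $u_n$, being continuous and compactly supported, lie in $E$. This last convergence is exactly where the admissibility condition encoded in $\hth$ and the relative-capacity machinery of \cite{W02,AW1} enter decisively, which is why the argument parallels those references.
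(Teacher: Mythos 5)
Your overall architecture is the right one: exhibit $(\mathcal E_{\kappa,\theta},\mathfrak D)$ as a closed extension of $(\mathcal E_{\kappa,\theta},E)$ and then prove $E$ is dense in $(\mathfrak D,\|\cdot\|_{\mathcal E_{\kappa,\theta}})$. Two of your three steps are sound as written. The closedness argument is correct: form-norm Cauchy sequences are $\Hun$-Cauchy, a subsequence of the r.q.c.\ representatives converges r.q.e., admissibility of $(\kappa,\theta)$ transfers this to $\kappa$-a.e.\ and $\hth$-a.e.\ convergence, and symmetry of $\theta$ upgrades it to $\theta$-a.e.\ convergence of the differences, which identifies the $L^2(\kappa)$- and $L^2(\theta)$-limits. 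The truncation reduction is also legitimate, since $|\widetilde{u}^{(k)}(x)-\widetilde{u}^{(k)}(y)|\leq|\widetilde{u}(x)-\widetilde{u}(y)|$ supplies the required dominating function. Note for comparison that the paper gives no proof at all here: it refers to \cite{W02} and \cite{AW1}, both of which treat only the local case $\theta=0$, so the burden of the nonlocal argument is exactly what your proposal needed to carry.

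The genuine gap is the density of $E$ in $\mathfrak D$, and it occurs at two concrete points. First, the approximants $u_n\in\Hun\cap C_c(\ombar)$ that you extract from the definition of $\Huntild$ (even after truncation at level $\|u\|_\infty$) need not belong to $E$ at all: membership in $E$ requires $\int_{\po\times\po\setminus_d}(u_n(x)-u_n(y))^2\,d\theta<\infty$, and a bounded continuous function can have infinite jump energy when $\theta$ blows up at the diagonal (take $\theta$ with a kernel of the type $|x-y|^{-N-2s}$ against the surface measure and $u_n$ insufficiently H\"older continuous). So the sequence you propose to use may not even lie in the set whose density you are proving. Second, even granting $u_n\in E$, your treatment of what you correctly call the heart of the theorem is a declared intention rather than an argument: you say you would ``produce a fixed $\theta$-integrable majorant'' or establish uniform $\theta$-integrability of $(u_n(x)-u_n(y))^2$ by ``exploiting that the continuous approximants inherit from $u$ the smallness of their oscillation across the diagonal,'' but nothing in your construction yields such control. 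Convergence in $\Hun$ together with r.q.e.\ convergence of traces gives no information about the joint oscillation $u_n(x)-u_n(y)$ near the diagonal, and that is precisely the quantity that $\theta$ weights with infinite mass; one would have to build the approximating sequence so that its diagonal oscillation is dominated by that of $\widetilde{u}$ (or work with form-specific tools such as Mosco-type monotone approximation), and this is where the nonlocal case genuinely departs from \cite{W02,AW1}. Since $\po$ is compact and $\kappa$ is Radon, $\kappa(\po)<\infty$ and the $\kappa$-term is indeed routine, as you say; but the entire nonlocal content of the theorem is concentrated in the step you left open, so the proof is incomplete exactly where it cannot be borrowed from the cited local results.
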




\begin{remark}
\begin{enumerate}
\item The proof of theorem \ref{th:1} is based on the techniques in the proofs of \cite[Theorem 3.1.1]{W02} and \cite[Theorem 4.1.2]{Br}.
\item One can consider a general $\theta$ not necessarly Radon, but
in this case the definition of admissible measures should change as
follows: The pair $(\kappa,\theta)$ is said admissible if $\kappa$ is admissible and for any separated compact sets $K_1$ and $K_2$ on $\po$ such that $\rcap(K_1)\rcap(K_2)=0$, we have $\theta(K_1\times K_2)=0$ (see \cite{AS} for more details).
\end{enumerate}
 
\end{remark}

\section{Markov property}

We denote by $-\Delta_{\kappa,\theta}$ the self-adjoint operator associated with the closure of the form $\mathcal E_{\kappa,\theta}$; i.e.
\begin{equation*}
  \left\{
  \begin{aligned}
D(\Delta_{\kappa,\theta}):=&\{u\in\mathfrak D\,:\,\exists v\in L^2(\Omega)\,:\, \mathcal E(u,\varphi)=(v,\varphi)\,\forall\varphi\in\mathfrak D\}\\
\Delta_{\kappa,\theta}:=&-v
\end{aligned}
 \right.
\end{equation*}If we choose $\varphi\in\mathcal D(\Omega)$, we obtain
\[
\langle-\Delta u,\varphi\rangle=\langle v,\varphi\rangle 
\]where $\langle,\rangle$ denotes the duality between $\mathcal D(\Omega)'$ and $\mathcal D(\Omega)$. Since $\varphi\in\mathcal D(\Omega)$ is arbitrary, it follows that 
\[
-\Delta u=v\quad \text{in } \mathcal D(\Omega)'
\]Thus $\Delta_{\kappa,\theta}$ is a realization of the Laplacian in $L^2(\Omega)$
\begin{proposition}
The operator $\Delta_{\kappa,\theta}$ generates a symmetric sub-Markovian semigroup on $L^2(\Omega)$; i.e., $e^{-\Delta_{\kappa,\theta}}\geq 0$ for all $t\geq 0$ and 
\[\|e^{-\Delta_{\kappa,\theta}}f\|_\infty\leq \|f\|_\infty\qquad (t\geq 0)
\]for all $f\in L^2(\Omega)\cap L^\infty(\Omega)$.
\end{proposition}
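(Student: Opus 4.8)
The plan is to invoke the two Beurling-Deny criteria, which are available because the closure $(\mathcal E_{\kappa,\theta},\mathfrak D)$ is a closed positive form whose associated self-adjoint operator is $-\Delta_{\kappa,\theta}$. Accordingly, I would first establish positivity of the semigroup through the first criterion, by showing that $u\in\mathfrak D$ implies $|u|\in\mathfrak D$ with $\mathcal E_{\kappa,\theta}(|u|)\leq\mathcal E_{\kappa,\theta}(u)$; and then, knowing positivity, establish $L^\infty$-contractivity through the second criterion, by showing that $0\leq u\in\mathfrak D$ implies $u\wedge 1\in\mathfrak D$ with $\mathcal E_{\kappa,\theta}(u\wedge 1)\leq\mathcal E_{\kappa,\theta}(u)$. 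In both cases the verification is carried out term by term on the three pieces of the form.

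For the first criterion, the favourable behaviour of $|\cdot|$ on each term is the point. Since $|u|\in\Huntild$ whenever $u\in\Huntild$ (lattice property of the completed Sobolev space) with $|\nabla|u||=|\nabla u|$ a.e., the gradient term is unchanged. For the boundary term, $\widetilde{|u|}=|\tilde u|$ r.q.e.\ gives $\int_{\po}(\widetilde{|u|})^2\,d\kappa=\int_{\po}\tilde u^2\,d\kappa$. For the jump term, the reverse triangle inequality $\bigl||\tilde u(x)|-|\tilde u(y)|\bigr|\leq|\tilde u(x)-\tilde u(y)|$ yields $(\widetilde{|u|}(x)-\widetilde{|u|}(y))^2\leq(\tilde u(x)-\tilde u(y))^2$ pointwise, hence after integration against $\theta$. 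These three comparisons show simultaneously that the integrals defining $\mathfrak D$ stay finite, so $|u|\in\mathfrak D$, and that $\mathcal E_{\kappa,\theta}(|u|)\leq\mathcal E_{\kappa,\theta}(u)$.

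For the second criterion the same term-by-term comparison works, now using that $t\mapsto t\wedge 1$ is a normal contraction. The gradient term decreases because $\nabla(u\wedge 1)=\nabla u\,\chi_{\{u<1\}}$; the boundary term decreases because $0\leq u\wedge 1\leq u$ forces $(\tilde u\wedge 1)^2\leq\tilde u^2$ (using $\tilde u\geq0$ r.q.e.); and the jump term decreases because $t\mapsto t\wedge 1$ is $1$-Lipschitz, so $|(u\wedge 1)(x)-(u\wedge 1)(y)|\leq|u(x)-u(y)|$ r.q.e. Again $u\wedge 1\in\Huntild$ by the lattice property and the three integrals remain finite, so $u\wedge 1\in\mathfrak D$ and $\mathcal E_{\kappa,\theta}(u\wedge 1)\leq\mathcal E_{\kappa,\theta}(u)$. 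Combining the two criteria gives that the semigroup is positive and $L^\infty$-contractive, that is, sub-Markovian.

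The main obstacle is the bookkeeping with relatively quasi-continuous representatives inside the $\kappa$- and $\theta$-integrals: one must justify that $\widetilde{|u|}=|\tilde u|$ and $\widetilde{u\wedge 1}=\tilde u\wedge 1$ hold r.q.e., so that the elementary pointwise inequalities above may legitimately be applied to the chosen Borel representatives. This is exactly where admissibility of the pair $(\kappa,\theta)$ enters: since $\kappa+\hat\theta$ charges no relatively polar set and $\hat\theta(dx)=\theta(dx,\po)$, altering a representative on a relatively polar set changes neither the $\kappa$-integral nor the $\theta$-integral, so the comparisons survive passage to the integrated form. Granting this identification, all remaining estimates are routine.
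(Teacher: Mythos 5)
Your proposal is correct and takes essentially the same approach as the paper: both verify the two Beurling--Deny criteria term by term on the closed form, using the reverse triangle inequality for $|u|$ and the contraction property of $t\mapsto t\wedge 1$ for the jump term (the paper establishes the latter by an explicit decomposition of $\po\times\po\setminus d$ into the regions determined by $\{u\leq 1\}$ and $\{u>1\}$, which is just a hands-on proof of your $1$-Lipschitz estimate). Your extra care with the relatively quasi-continuous representatives and the finiteness of the three integrals is a refinement the paper leaves implicit.
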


\begin{proof}
a) For $u\in D(\mathcal E)$. We have to show that $|u|\in D(\mathcal E)$ and $\mathcal E(|u|,|u|)\leq \mathcal E(u,u)$. Indeed, using the reverse triangle inequality $ \bigl\lvert|u|(x)-|u|(y)\bigr\rvert\leq|u(x)-u(y)|$ we get that

\begin{align*}
\mathcal E(|u|,|u|)&= \int_\Omega\left|\nabla |u|\right|\,dx+\int_{\po}|u|^2d\kappa+\int_{\po\times\po\setminus}\bigl\lvert|u|(x)-|u|(y)\bigr\rvert^2\theta(dx,dy)\\
& \leq\int_\Omega\left|\nabla u\right|\,dx+\int_{\po}|u|^2d\kappa+\int_{\po\times\po\setminus d}|u(x)-u(y)|^2\theta(dx,dy)\\
&=\mathcal E(u,u)
\end{align*}It follows from the first Beurling-Deny criterion that $e^{-t\Delta_{\kappa,\theta}}\geq 0$ for all $t\geq 0$ .

b) Let $0\leq u\in D(\mathcal E)$. We have
\begin{align*}
\int_{\po\times\po\setminus d}((u\wedge 1)(x)-(u\wedge 1)(y))^2\theta(dx,dy)& = \int_{\{u\leq 1\}\times \{u\leq 1\}\setminus d}(u(x)-u(y))^2\theta(dx,dy)\\&+\int_{\{u\leq 1\}\times \{u> 1\}\setminus d}(u(x)-1)^2\theta(dx,dy)\\&+\int_{\{u > 1\}\times \{u\leq 1\}\setminus d}(1-u(y))^2\theta(dx,dy)\\
\end{align*}On $\{u\leq 1\}\times \{u> 1\}\setminus d$ we have $0\leq u(x)\leq 1 < u(y)$. Thus $0\leq 1-u(x)< u(y)-u(x)$. Similarly, on $\{u> 1\}\times \{u\leq 1\}\setminus d$ we have $0\leq u(y)\leq 1 < u(x)$. Thus $0\leq 1-u(y)< u(x)-u(y)$. It follows that 
\begin{align*}
\int_{\po\times\po\setminus d}((u\wedge 1)(x)-(u\wedge 1)(y))^2\theta(dx,dy)& = \int_{\{u\leq 1\}\times \{u\leq 1\}\setminus d}(u(x)-u(y))^2\theta(dx,dy)\\&+\int_{\{u\leq 1\}\times \{u> 1\}\setminus d}(u(y)-u(x))^2\theta(dx,dy)\\&+\int_{\{u > 1\}\times \{u\leq 1\}\setminus d}(u(x)-u(y))^2\theta(dx,dy)\\
&=\int_{\po\times\po\setminus d}(u(x)-u(y))^2\theta(dx,dy)
\end{align*}
It follows that $u\wedge 1\in D(\mathcal E_{\kappa,\theta})$ and $\mathcal E_{\kappa,\theta}(u\wedge 1, u\wedge 1)\leq \mathcal E_{\kappa,\theta}(u,u)$. By the second Beurling-Deny criterion, the semigroup $e^{-t\Delta_{\kappa,\theta}}$ is $L^\infty-$contractive.
\end{proof}
The semigroup $(e^{-t\Delta_{\kappa,\theta}})_{t\geq 0}$ can be then extended to a contraction on $L^p(\Omega)$ for every $p\in[1,\infty[$. In addition we have the following result
\begin{corollary}
There exist consistent (i.e., $T_p(t)f=T_q(t)f$ if $f\in L^p(\Omega)\cap L^q(\Omega)$) $C_0-$semigroups $(e^{-t\Delta_{p,\kappa,\theta}})_{t\geq 0}$ on $L^p(\Omega)$, $1\leq p<\infty$, such that $\Delta_{2,\kappa,\theta}=\Delta_{\kappa,\theta}$.
\end{corollary}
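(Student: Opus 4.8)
The plan is to invoke the standard theory of consistent semigroups on $L^p$-scales that arise from sub-Markovian semigroups, so the corollary becomes a clean application of an interpolation-extrapolation argument rather than an independent construction. The starting point is the preceding proposition: the semigroup $(e^{-t\Delta_{\kappa,\theta}})_{t\geq 0}$ on $L^2(\Omega)$ is positive and $L^\infty$-contractive, hence sub-Markovian. The key structural input is that such a semigroup is simultaneously contractive on $L^1$ and $L^\infty$ (in the appropriate sense on the dense intersection $L^1\cap L^\infty$), and this is exactly what permits extrapolation to the whole scale.

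First I would record that $L^\infty$-contractivity on $L^2\cap L^\infty$ extends by density to an $L^\infty$-contraction property, and that, by symmetry of the semigroup, the adjoint $e^{-t\Delta_{\kappa,\theta}}$ is also $L^\infty$-contractive; dualizing gives $L^1$-contractivity, i.e. $\|e^{-t\Delta_{\kappa,\theta}}f\|_1\le\|f\|_1$ for $f\in L^1(\Omega)\cap L^2(\Omega)$. Next, the Riesz--Thorin interpolation theorem applied between the $L^1$- and $L^\infty$-bounds yields $\|e^{-t\Delta_{\kappa,\theta}}f\|_p\le\|f\|_p$ for every $p\in[1,\infty]$ and every $f$ in the dense intersection, so each operator $e^{-t\Delta_{\kappa,\theta}}$ extends uniquely to a contraction $T_p(t)$ on $L^p(\Omega)$ for $1\le p<\infty$. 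Consistency, namely $T_p(t)f=T_q(t)f$ for $f\in L^p(\Omega)\cap L^q(\Omega)$, is immediate because all these operators agree on the common dense subspace $L^1\cap L^\infty$ and are bounded, so they coincide wherever both are defined. That the family $(T_p(t))_{t\ge0}$ is a semigroup follows from the semigroup law on $L^2$ together with density and consistency, and by construction $\Delta_{2,\kappa,\theta}=\Delta_{\kappa,\theta}$.

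The one point requiring genuine care is strong continuity: to conclude that each $(T_p(t))_{t\ge0}$ is a $C_0$-semigroup and not merely a contraction semigroup, I would establish $\|T_p(t)f-f\|_p\to0$ as $t\downarrow0$. For $1\le p<2$ this is cleanest via the $L^1$--$L^2$ interpolation inequality $\|g\|_p\le\|g\|_1^{1-\alpha}\|g\|_2^{\alpha}$: strong continuity on $L^2$ plus the uniform $L^1$-contraction bound forces $\|T_p(t)f-f\|_p\to0$ first for $f\in L^1\cap L^2$ and then, by the uniform contractivity and density of $L^1\cap L^2$ in $L^p$, for all $f\in L^p$. For $2<p<\infty$ one argues symmetrically with the $L^2$--$L^\infty$ interpolation inequality. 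The main obstacle is precisely this strong-continuity verification at $p=1$ and for large finite $p$, where one cannot interpolate strong continuity directly (it is not an interpolation-stable property) and must instead combine the $L^2$-continuity with the uniform contraction bounds and an $\varepsilon$-approximation by elements of the dense intersection; the remaining assertions are formal consequences of density and the uniqueness of bounded extensions.
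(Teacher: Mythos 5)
Your proof is correct and follows essentially the same route as the paper: the paper simply cites \cite[Theorem 1.4.1]{D89} (Davies' extrapolation theorem for symmetric sub-Markovian semigroups), and your argument --- duality to get $L^1$-contractivity from symmetry and $L^\infty$-contractivity, Riesz--Thorin interpolation for the $L^p$ bounds, consistency by density, and strong continuity via the interpolation inequalities combined with the uniform contraction bounds --- is precisely the standard proof of that cited theorem. Nothing is missing; you have just unpacked the black box the paper invokes.
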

\begin{proof}
This follows immediately from \cite[Theorem 1.4.1]{D89}
\end{proof}

\section{Domination}

Now, we want to verify if the semigroup generated by
$\Delta_{\kappa,\theta}$ is or is not sandwiched between the semigroups of Dirichlet Laplacian and the semigroup of Neumann Laplacian. The left hand side of the inequality is assured by the following theorem
\begin{theorem}
 We have \[e^{t\Delta_D}\leq e^{t\Delta_{\kappa,\theta}}\]
\end{theorem}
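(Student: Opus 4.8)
The plan is to apply \textbf{Ouhabaz's domination criterion} recalled in Section~2. Writing $S(t)=\eD$ and $T(t)=e^{-t\Delta_{\kappa,\theta}}$, both semigroups are positive: the Dirichlet semigroup is the classical sub-Markovian semigroup associated with $(\mathcal E_D,H^1_0(\Omega))$, while positivity of $T$ was established in the preceding Proposition. Hence it suffices to show that $D(\mathcal E_D)=H^1_0(\Omega)$ is an ideal of $D(\mathcal E_{\kappa,\theta})=\mathfrak D$, and that $\mathcal E_{\kappa,\theta}(u,v)\leq\mathcal E_D(u,v)$ for all $u,v\in H^1_0(\Omega)_+$; the criterion then yields $0\leq\eD\leq e^{-t\Delta_{\kappa,\theta}}$ for all $t\geq0$.

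First I would record the inclusion $H^1_0(\Omega)\subset\mathfrak D$. Using the capacity characterization $H^1_0(\Omega)=\{u\in\Huntild:\tilde u=0\text{ r.q.e. on }\po\}$, for such $u$ the representative $\tilde u$ vanishes $\kappa$-a.e.\ and $\hth$-a.e., because admissibility of $(\kappa,\theta)$ forces $\kappa+\hth$ to charge no relatively polar set; consequently both boundary integrals in the definition of $\mathfrak D$ vanish and $u\in\mathfrak D$. For the ideal property I would then verify conditions (a) and (b) of Section~2. Condition (a) is immediate: if $u\in H^1_0(\Omega)$ then $|u|\in H^1_0(\Omega)\subset\mathfrak D$ by the lattice stability of $H^1_0(\Omega)$. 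For (b), suppose $0\leq u\leq v$ with $v\in H^1_0(\Omega)$ and $u\in\mathfrak D\subset\Huntild$; passing to r.q.c.\ representatives, the a.e.\ inequality $0\leq u\leq v$ upgrades to $0\leq\tilde u\leq\tilde v$ r.q.e.\ on $\ombar$, and since $\tilde v=0$ r.q.e.\ on $\po$ we get $\tilde u=0$ r.q.e.\ on $\po$, whence $u\in H^1_0(\Omega)$ by the same characterization.

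Next I would establish the form comparison. For $u,v\in H^1_0(\Omega)$ the representatives $\tilde u,\tilde v$ vanish r.q.e.\ on $\po$, hence $\kappa$-a.e.\ and $\hth$-a.e.; in particular $\tilde u(x)-\tilde u(y)=0$ and $\tilde v(x)-\tilde v(y)=0$ for $\theta$-a.e.\ $(x,y)$. Substituting into the explicit expression for the closure given in the preceding theorem, the $\kappa$- and $\theta$-terms drop out and one is left with
\[
\mathcal E_{\kappa,\theta}(u,v)=\int_\Omega\nabla u\,\nabla v\,dx=\mathcal E_D(u,v).
\]
Thus the required inequality holds, in fact with equality, on $H^1_0(\Omega)_+$, and the two hypotheses of the criterion are met.

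The main obstacle is condition (b), which is where the capacity machinery is genuinely used: one must pass from the $m$-a.e.\ ordering $0\leq u\leq v$ to the pointwise r.q.e.\ ordering $0\leq\tilde u\leq\tilde v$ of the quasi-continuous representatives. This rests on the fact---valid for the relative capacity because the underlying form $(\mathcal E,\mathcal D)$ is a regular Dirichlet form---that an a.e.\ inequality between quasi-continuous functions upgrades to an r.q.e.\ inequality. Once this is secured, the remainder is routine bookkeeping of which integrals vanish, and the admissibility of $(\kappa,\theta)$ is precisely what guarantees those vanishings.
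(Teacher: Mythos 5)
Your proof is correct and follows essentially the same route as the paper: Ouhabaz's domination criterion, the ideal property of $H^1_0(\Omega)$ in $\mathfrak D$ obtained by upgrading the a.e.\ inequality to an r.q.e.\ one, and the form comparison on $H^1_0(\Omega)_+$ using admissibility of $(\kappa,\theta)$. The only cosmetic difference is in the last step: the paper keeps the cross term $-2\int_{\po\times\po\setminus_d}u(x)v(y)\,d\theta$ and bounds it by zero using positivity of $u,v$, whereas you invoke the symmetry of $\theta$ to show the entire jump term vanishes, giving equality rather than just the required inequality.
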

\begin{proof}
 By Ouhabaz's domination criterion, it suffices to prove that $H^1_0(\Omega)$ is an ideal of $D(\mathcal E_{\kappa,\theta})$ and $\mathcal E_{\kappa,\theta}(u,v)\leq\int_{\Omega}\nabla u\nabla v dx$ for all $u,v\in H^1_0(\Omega)_+$. We may assume that functions in $\Huntild$ are r.q.c.
\begin{enumerate}
\item  Let $u\in H^1_0(\Omega)$ and $v\in D(\mathcal E_{\kappa,\theta})$ such that
$0\leq v\leq u$. Since $\ombar$ is relatively open, it follows form
\cite[Lemma 2.1.4]{FOT94} that  $0\leq v\leq u$ r.q.c. on $\ombar$.
We have $u=0$ r.q.e. on $\po$, then $v=0$ r.q.e. on $\po$. Therefore
$v\in H^1_0(\Omega)$.

\item Let $u,v\in H^1_0(\Omega)_+$. Then $u=v=0$ r.q.e. on $\po$. Since
the pair $(\kappa,\theta)$ is admissible, it follows that $u=v=0$
$(\kappa+\hat{\theta})-$a.e. on $\Gamma_{\kappa,\theta}$. We finally obtain that
\begin{eqnarray*}
 \mathcal E_{\kappa,\theta}(u,v) &=&\int_{\Omega}\nabla u\nabla v dx+\int_{\po}uvd\kappa\\[0,2cm]& \qquad&+\int_{\po\times\po\setminus_d}(u(x)-u(y))(v(x)-v(y))d\theta \\[0,2cm]
   &=& \int_{\Omega}\nabla u\nabla v dx+\int_{\po}uvd(\kappa+2\hat{\theta})-2\int_{\po\times\po\setminus_d}u(x)v(y)d\theta\\[0,2cm]
   &=& \int_{\Omega}\nabla u\nabla v dx-2\int_{\po\times\po\setminus_d}u(x)v(y)d\theta\\[0,2cm]
   &\leq & \int_{\Omega}\nabla u\nabla v dx
\end{eqnarray*}
\end{enumerate}
and the proof is complete.
\end{proof}

The right hand side of the sandwiched inequality is not valid. We
have then the following theorem,

\begin{theorem}
We have \[e^{t\Delta_{\kappa,\theta}}\leq e^{t\Delta_N}\quad\text{ if
and only if } \quad\theta\equiv 0\]
\end{theorem}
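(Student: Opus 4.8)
The plan is to apply Ouhabaz's domination criterion, which reduces the equivalence to verifying two conditions characterizing $0\leq e^{t\Delta_{\kappa,\theta}}\leq e^{t\Delta_N}$: that $D(\mathcal E_{\kappa,\theta})$ is an ideal of $D(\mathcal E_N)=H^1(\Omega)$, and that $\mathcal E_N(u,v)\leq \mathcal E_{\kappa,\theta}(u,v)$ for all $u,v\in D(\mathcal E_{\kappa,\theta})_+$. The direction ($\Leftarrow$) is the easy one: if $\theta\equiv 0$, then the form collapses to $\mathcal E_{\kappa,0}(u,v)=\int_\Omega\nabla u\,\nabla v\,dx+\int_{\po}\tilde u\tilde v\,d\kappa$, and since $\kappa\geq 0$ the Robin term is nonnegative for $u,v\geq 0$, giving $\mathcal E_{\kappa,0}(u,v)\geq \int_\Omega\nabla u\,\nabla v\,dx=\mathcal E_N(u,v)$; the ideal property for the local Robin case is exactly the content recalled from \cite{AW1}. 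Hence the domination holds, recovering the classical sandwich \eqref{dom1}.

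For the substantive direction ($\Rightarrow$), I would argue by contraposition: assume $\theta\not\equiv 0$ and produce a pair $u,v\in D(\mathcal E_{\kappa,\theta})_+$ for which the comparison $\mathcal E_N(u,v)\leq\mathcal E_{\kappa,\theta}(u,v)$ fails, thereby violating Ouhabaz's criterion. The key algebraic observation is the identity already exploited in the previous theorem: for nonnegative $u,v$ vanishing appropriately one expands
\[
\mathcal E_{\kappa,\theta}(u,v)=\int_\Omega\nabla u\,\nabla v\,dx+\int_{\po}\tilde u\tilde v\,d(\kappa+2\hth)-2\int_{\po\times\po\setminus_d}\tilde u(x)\tilde v(y)\,d\theta,
\]
so that $\mathcal E_{\kappa,\theta}(u,v)-\mathcal E_N(u,v)=\int_{\po}\tilde u\tilde v\,d(\kappa+2\hth)-2\int_{\po\times\po\setminus_d}\tilde u(x)\tilde v(y)\,d\theta$. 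Since $\theta\not\equiv 0$, by inner regularity (as in Lemma \ref{lem:1}) I can select disjoint compact sets $K,C\subset\po$ with $\theta(K\times C)>0$, and then choose $u,v\in E_+$ supported near $K$ and $C$ respectively so that the positive cross term $2\int\tilde u(x)\tilde v(y)\,d\theta$ dominates the diagonal contribution. Concretely, taking $u$ concentrated on a neighborhood of $K$ and $v$ on a neighborhood of $C$ forces the $\kappa+2\hth$ term (which only sees overlapping supports) to be small while the off-diagonal term stays bounded below, making the difference strictly negative.

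The main obstacle I anticipate is the ideal property: one must check whether $D(\mathcal E_{\kappa,\theta})$ is genuinely an ideal of $H^1(\Omega)$, and in fact when $\theta\not\equiv 0$ this too can fail, which is actually a cleaner route to the negative conclusion. Because the jump term penalizes differences $\tilde u(x)-\tilde u(y)$, a function $u\in H^1(\Omega)$ that is perfectly admissible for the Neumann form need not lie in $D(\mathcal E_{\kappa,\theta})$ once its boundary trace has a jump across the support of $\theta$; conversely the order-ideal condition (b) can break because truncating or comparing functions interacts nontrivially with the nonlocal difference. I would therefore present the failure of at least one of the two Ouhabaz conditions explicitly, and the delicate point is ensuring the chosen test functions actually belong to $E$ (finite $\kappa$- and $\theta$-energy) while realizing the sign violation; this requires invoking admissibility of $(\kappa,\theta)$ together with the density of $H^1(\Omega)\cap C_c(\ombar)$ in $C_c(\ombar)$ to build separating functions, exactly as in the construction used in the proof of Theorem \ref{th:1}.
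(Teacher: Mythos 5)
Your proposal is correct and follows essentially the same route as the paper: both hinge on Ouhabaz's domination criterion, with the easy direction deferred to the known local Robin sandwich and the substantive direction reduced---via the expansion $\mathcal E_{\kappa,\theta}(u,v)=\mathcal E_N(u,v)+\int_{\po}\tilde u\tilde v\,d(\kappa+2\hat\theta)-2\int_{\po\times\po\setminus_d}\tilde u(x)\tilde v(y)\,d\theta$---to testing the form inequality on nonnegative functions with disjoint supports, which isolates the negative cross term. The only difference is organizational: you argue by contraposition and spell out (via Lemma \ref{lem:1} and density of $H^1(\Omega)\cap C_c(\ombar)$ in $C_c(\ombar)$) the construction of the separating test functions, a step the paper compresses into the single closing assertion that vanishing of all such cross terms forces $\theta\equiv 0$.
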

\begin{proof} Similarly to the previous result, we use Ouhabaz's domination criterion and we may assume that functions in $\Huntild$ are r.q.c.
\begin{enumerate}
 \item Suppose that we have $e^{t\Delta_{\kappa,\theta}}\leq e^{t\Delta_N}$. By Ouhabaz's domination criterion, we know that $\int_{\Omega}\nabla u\nabla v dx\leq \mathcal E_{\kappa,\theta}(u,v)$ for
 all $u,v\in D(\mathcal E_{\kappa,\theta})_+$. Which means that for all $u,v\in D(\mathcal E_{\kappa,\theta})_+$ we have
\[
 \int_{\Omega}\nabla u\nabla v dx\leq \int_{\Omega}\nabla u\nabla v dx+
 \int_{\po}uvd(\kappa+\hat{\theta})-\int_{\po\times\po\setminus_d}u(x)v(y)d\theta
\]
Thus for all $u,v\in D(\mathcal E_{\kappa,\theta})_+$,
\[
 \int_{\po\times\po\setminus_d}u(x)v(y)d\theta\leq\int_{\po}uvd(\kappa+\hat{\theta})
\]
We choose  $u,v\in D(\mathcal E_{\kappa,\theta})_+$ such that
$\mathrm{supp}[u]\cap\mathrm{supp}[v]=\emptyset$, then we get
\[
 \int_{\po\times\po\setminus_d}u(x)v(y)d\theta\leq 0
\]
Which means that $\int_{\po\times\po\setminus_d}u(x)v(y)d\theta=0$
for all $u,v\in D(\mathcal E_{\kappa,\theta})_+$ such that
$\mathrm{supp}[u]\cap\mathrm{supp}[v]=\emptyset$. Thus $\theta\equiv
0$ because $\mathrm{supp}(\theta) \subset\po\times\po\setminus d$.
\item Now if $\theta\equiv 0$ the assertion follow from the sandwiched property proved in \cite{AW2} or \cite{A12}.
\end{enumerate}
\end{proof}
In the next theorem, we prove that nonlocal Robin semigroup can be dominated by a particular local semigroup.
\begin{theorem}\label{dom3}
Let $(\mu,\theta)$ be an admissible pair of measures. Then
\[
e^{-t\Delta_{{\kappa}+2\hat\theta}}\leq e^{-t\Delta_{\kappa,\theta}}\qquad (t\geq 0)
\]in the positive operators sense, where $\Delta_{{\kappa}+2\hat\theta}:=\Delta_{{\kappa}+2\hat\theta,0}$.
\end{theorem}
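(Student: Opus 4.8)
The plan is to apply Ouhabaz's domination criterion with $\mathcal E:=\mathcal E_{\kappa+2\hth,0}$, the local generalized Robin form associated with the measure $\kappa+2\hth$ and vanishing jump measure, and $\mathcal F:=\mathcal E_{\kappa,\theta}$. Since $S(t)=e^{-t\Delta_{\kappa+2\hth}}$ is the semigroup associated with $\mathcal E$ and $T(t)=e^{-t\Delta_{\kappa,\theta}}$ the one associated with $\mathcal F$, establishing $0\le S(t)\le T(t)$ reduces to checking (i) that $D(\mathcal E)$ is an ideal of $D(\mathcal F)$ and (ii) that $\mathcal E(u,v)\ge\mathcal F(u,v)$ for all $u,v\in D(\mathcal E)_+$. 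First I would note that $\Delta_{\kappa+2\hth}$ is well defined: admissibility of the pair $(\kappa,\theta)$ means that $\kappa+\hth$, and hence $\kappa+2\hth$, charges no relatively polar set, so the local form $\mathcal E_{\kappa+2\hth,0}$ is closable by Theorem \ref{th:1} applied with jump measure $0$. As in the previous domination theorems, we may assume every function of $\Huntild$ is represented by its r.q.c. version.

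Step (ii) is a direct computation, carried out exactly as in the left-hand domination theorem. For $u,v\in D(\mathcal E)_+$, using the symmetry of $\theta$ and $\int_{\po}\tilde u\tilde v\,d\hth=\int_{\po\times\po\setminus d}\tilde u(x)\tilde v(x)\,d\theta$, one obtains
\[
\mathcal F(u,v)=\int_{\Omega}\nabla u\nabla v\,dx+\int_{\po}\tilde u\tilde v\,d(\kappa+2\hth)-2\int_{\po\times\po\setminus d}\tilde u(x)\tilde v(y)\,d\theta,
\]
so that
\[
\mathcal E(u,v)-\mathcal F(u,v)=2\int_{\po\times\po\setminus d}\tilde u(x)\tilde v(y)\,d\theta\ge 0,
\]
the integrand being nonnegative since $\tilde u,\tilde v\ge 0$ r.q.e. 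This gives (ii).

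For the ideal property I would verify the two conditions separately. For (a), let $u\in D(\mathcal E)$, i.e. $u\in\Huntild$ with $\int_{\po}\tilde u^2\,d(\kappa+2\hth)<\infty$; since $\Huntild$ is a lattice, $|u|\in\Huntild$ with $\widetilde{|u|}=|\tilde u|$ r.q.e., the local part $\int_{\po}|\tilde u|^2\,d\kappa$ is finite, and for the nonlocal part the elementary bound $(a-b)^2\le 2(a^2+b^2)$ together with the symmetry of $\theta$ yields
\[
\int_{\po\times\po\setminus d}\bigl(|\tilde u|(x)-|\tilde u|(y)\bigr)^2\,d\theta\le 4\int_{\po}\tilde u^2\,d\hth<\infty,
\]
so $|u|\in D(\mathcal F)$. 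For (b), suppose $0\le u\le v$ with $v\in D(\mathcal E)$ and $u\in D(\mathcal F)\subset\Huntild$; then $0\le\tilde u\le\tilde v$ r.q.e., and since the pair $(\kappa,\theta)$ is admissible this inequality holds $(\kappa+2\hth)$-a.e., whence $\int_{\po}\tilde u^2\,d(\kappa+2\hth)\le\int_{\po}\tilde v^2\,d(\kappa+2\hth)<\infty$ and $u\in D(\mathcal E)$. Combining (i) and (ii), Ouhabaz's criterion yields the claimed domination.

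The step I expect to be the crux is condition (a) of the ideal property: the fact that the entire nonlocal jump energy can be dominated by the purely local integral against $\hth$. This is precisely where the choice of the comparison measure $\kappa+2\hth$ (rather than $\kappa$) is forced, and it reflects the structural principle emphasized throughout the paper, namely that the jump term is governed by $\hth$ rather than by $\theta$ itself. The q.e.-versus-a.e. passage in (b) is then routine given admissibility, exactly as in the left-hand domination theorem.
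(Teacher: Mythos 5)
Your proof is correct and takes essentially the same route as the paper: Ouhabaz's domination criterion applied to the pair $\mathcal E_{\kappa+2\hth,0}$ versus $\mathcal E_{\kappa,\theta}$, with the same symmetry-based expansion of the jump term yielding $\mathcal E_{\kappa,\theta}(u,v)=\mathcal E_{\kappa+2\hth}(u,v)-2\int_{\po\times\po\setminus d}u(x)v(y)\,d\theta\le\mathcal E_{\kappa+2\hth}(u,v)$ on positive elements. If anything, yours is slightly more complete: you check part (a) of the ideal property explicitly (via $(a-b)^2\le 2(a^2+b^2)$ and the symmetry of $\theta$) and note the closability of $\mathcal E_{\kappa+2\hth,0}$ from admissibility, both of which the paper leaves implicit, while your verification of part (b) by direct monotonicity of $\int_{\po}\tilde u^2\,d(\kappa+2\hth)$ is shorter than the paper's add-and-subtract computation.
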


\begin{proof}
Similarly to the previous result, we use Ouhabaz's domination criterion and we may assume that functions in $\Huntild$ are r.q.c.
\begin{enumerate}
\item Let $u\in D(\mathcal E_{\kappa+2\hat\theta})$ and $v\in  D(\mathcal E_{\kappa,\theta})$ such that $0\leq v\leq u$ . We have
\begin{align*}
\mathcal E_{\kappa+2\hat\theta}(v)&=\int_{\Omega}|\nabla v|^2\,dx(u)+\int_{\po}|v|^2\,d(\kappa+2\hat\theta)\\
&=\int_{\Omega}|\nabla v|^2\,dx(u)+\int_{\po}|v|^2\,d(\kappa+2\hat\theta)-2\int_{\po\times\po}v(x)v(y)\theta(dx,dy)\\
&\qquad\qquad+2\int_{\po\times\po}v(x)v(y)\theta(dx,dy)\\
&= \int_{\Omega}|\nabla v|^2\,dx+\int_{\po}|v|^2\,d\kappa+\int_{\po\times\po}(v(x)-v(y))^2\theta(dx,dy)\\
&\qquad\qquad+2\int_{\po\times\po}v(x)v(y)\theta(dx,dy)\\
&=\mathcal E_{\kappa,\theta}(v)+2\int_{\po\times\po}v(x)v(y)\theta(dx,dy)\\
&\leq \mathcal E_{\kappa,\theta}(v)+\int_{\po\times\po}( u^2(x)+u^2(y))\theta(dx,dy)\\
&\leq \mathcal E_{\kappa,\theta}(v)+2\int_{\po} u^2(x)\hat\theta(dx)\\
&\leq \mathcal E_{\kappa,\theta}(v)+\mathcal E_{\kappa+2\hat\theta}(u)
\end{align*}This means that $v\in D(\mathcal E_{\kappa+2\hat\theta})$.
\item Let $u,\,v\in D(\mathcal E_{\kappa+2\hat\theta})_+ $. We have
\begin{align*}
\mathcal E_{\kappa,\theta}(u,v)&=\int_\Omega\nabla u\nabla v\,dx+\int_{\po}uv\,d\kappa+\int_{\po\times\po\setminus d}(u(x)-u(y)(v(x)-v(y))\,\theta(dx,dy)\\
&= \int_\Omega\nabla u\nabla v\,dx+\int_{\po}uv\,d\kappa+2\int_{\po}uv\,d\hat\theta-2\int_{\po\times\po\setminus d}u(x)v(y))\,\theta(dx,dy)\\
&=\mathcal E_{\kappa+2\hat\theta}(u,v)-2\int_{\po\times\po\setminus d}u(x)v(y))\,\theta(dx,dy)\\
&\leq \mathcal E_{\kappa+2\hat\theta}(u,v)
\end{align*}
\end{enumerate}
which completes the proof.
\end{proof}

\begin{corollary}
For any admissible measure $\nu$ on $\po$ such that $\nu\geq \kappa+2\hat\theta$ we have
\[
0\leq e^{-t\Delta_\nu}\leq e^{\Delta_{\kappa,\theta}}\quad (t\geq 0)
\]in the positive operators sense.

\end{corollary}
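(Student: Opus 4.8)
The plan is to introduce the local Robin Laplacian $\Delta_{\kappa+2\hth}$ as an intermediate object and chain two domination relations. Since Theorem \ref{dom3} already provides $e^{-t\Delta_{\kappa+2\hth}}\leq e^{-t\Delta_{\kappa,\theta}}$, it suffices to establish the purely local domination
\[
e^{-t\Delta_\nu}\leq e^{-t\Delta_{\kappa+2\hth}}\qquad(t\geq 0)
\]
between the two Robin Laplacians associated with the measures $\nu$ and $\mu:=\kappa+2\hth$ on $\po$, and then compose the two inequalities. First I would note that $\mu$ is admissible: the admissibility of the pair $(\kappa,\theta)$ means that $\kappa+\hth$ charges no relatively polar set, whence so does $\mu=(\kappa+\hth)+\hth$; consequently the form $\mathcal E_\nu=\mathcal E_{\nu,0}$ (like $\mathcal E_\mu$) is a Dirichlet form and in particular $0\leq e^{-t\Delta_\nu}$, which already secures the left-hand inequality of the assertion.

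To obtain the local domination I would invoke Ouhabaz's domination criterion, so the task is to check that $D(\mathcal E_\nu)$ is an ideal of $D(\mathcal E_\mu)$ and that $\mathcal E_\nu(u,v)\geq\mathcal E_\mu(u,v)$ for all $u,v\in D(\mathcal E_\nu)_+$. Using the characterization of the closure established above, $D(\mathcal E_\nu)=\{u\in\Huntild:\int_{\po}|\tilde u|^2\,d\nu<\infty\}$ and likewise for $\mu$; since $\nu\geq\mu$ the inequality $\int|\tilde u|^2\,d\mu\leq\int|\tilde u|^2\,d\nu$ gives at once $D(\mathcal E_\nu)\subset D(\mathcal E_\mu)$. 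The form inequality is then immediate, because $\nu-\mu$ is a nonnegative measure and $\tilde u\tilde v\geq 0$ for $u,v\geq 0$, so that
\[
\mathcal E_\nu(u,v)-\mathcal E_\mu(u,v)=\int_{\po}\tilde u\,\tilde v\,d(\nu-\mu)\geq 0.
\]

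For the ideal property, part (a) follows because $u\in D(\mathcal E_\nu)\subset D(\mathcal E_\mu)$ and $D(\mathcal E_\mu)$, being a Dirichlet form domain, is stable under $u\mapsto|u|$ with $\int|\widetilde{|u|}|^2\,d\mu=\int|\tilde u|^2\,d\mu<\infty$. For part (b), if $0\leq u\leq v$ with $v\in D(\mathcal E_\nu)$ and $u\in D(\mathcal E_\mu)$, then $u\in\Huntild$, and since $\nu$ is admissible the inequality $0\leq\tilde u\leq\tilde v$, which holds r.q.e. on $\po$, also holds $\nu$-a.e.; hence $\int|\tilde u|^2\,d\nu\leq\int|\tilde v|^2\,d\nu<\infty$ and $u\in D(\mathcal E_\nu)$. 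Ouhabaz's criterion then yields $e^{-t\Delta_\nu}\leq e^{-t\Delta_{\kappa+2\hth}}$, and combining this with Theorem \ref{dom3} produces the claimed chain $0\leq e^{-t\Delta_\nu}\leq e^{-t\Delta_{\kappa,\theta}}$.

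The argument is essentially a transitivity of domination, so no single step is a deep obstacle; the only point demanding care is the passage in part (b) of the ideal property from the relatively-quasi-everywhere inequality $0\leq\tilde u\leq\tilde v$ to a $\nu$-almost-everywhere inequality, which is precisely where the admissibility hypothesis on $\nu$ is genuinely used and without which the ideal structure could fail.
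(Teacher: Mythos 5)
Your proof is correct and takes essentially the same route as the paper: both arguments interpose the local Robin Laplacian $\Delta_{\kappa+2\hat\theta}$ and chain Theorem \ref{dom3} with the domination $e^{-t\Delta_\nu}\leq e^{-t\Delta_{\kappa+2\hat\theta}}$ for ordered admissible measures. The only difference is that the paper obtains this second inequality by citing Theorem 3.2 of Arendt and Warma \cite{AW2}, whereas you reprove that fact inline via Ouhabaz's criterion (ideal property plus form inequality), which is a correct, self-contained verification of exactly what the citation supplies.
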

\begin{proof}
This follows immediately from Theorem \ref{dom3} and \cite[Theorem 3.2]{AW2}.
\end{proof}

\textbf{Open question:} for which admissible measure $\nu$, the semigroup $e^{-t\Delta_\nu}$ is the largest semigroup associated with local Robin boundary conditions and which is dominated by the nonlocal Robin semigroup. By Theorem 3.2 in \cite{AW2}, we know that $\nu\leq \kappa+2\hat\theta$. We conjecture that $\nu=\mu+2\hat\theta$ satisfies this property.

\section{Convergences}

Le $\Omega$ be a bounded open set in $\mathbb R^N$. Let $\kappa_n$ and $\theta_n$ be two sequences such that $(\kappa_n,\theta_n)$ is  an admissible pair of measures and $\kappa$ and $\theta$ two other meaures such that $(\kappa,\theta)$ is an admissible pair of measures. Consider the sub-Markovian $C_0-$semigroups $(e^{-t\Delta_{\kappa_n,\theta_n}})_{t\geq 0}$ and $(e^{-t\Delta_{\kappa,\theta}})_{t\geq 0}$ on $L^2(\Omega)$ generated respectively by $\Delta_{\kappa_n,\theta_n}$ and $\Delta_{\kappa,\theta}$. The question is, whether we have
\[
e^{-t\Delta_{\kappa_n,\theta_n}}\rightarrow e^{-t\Delta_{\kappa,\theta}}\quad \text{ as }\quad n\to\infty
\]if the measures $\kappa_n$ and $\theta_n$ converge to $\kappa$ and $\theta$, respectively, in an appropriate sense.  The results here generalize the results in \cite{W02} to the nonlocal case by using the same techniques.

\begin{proposition}
Let $\kappa_n$, $\theta_n$, $\kappa$ and $\theta$ be finite Borel measures such that $(\kappa_n,\theta_n)$ and $(\kappa,\theta)$ are admissible pairs of measures. Assume that $(\kappa_n,\theta_n)$ is monotone and $(\kappa_n,\theta_n)\to(\kappa,\theta)$ vaguely. Then $\Delta_{\kappa_n,\theta_n}\to \Delta_{\kappa,\theta}$ in the strong resolvent sense.

\end{proposition}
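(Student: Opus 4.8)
The plan is to prove strong resolvent convergence via Mosco convergence of the forms $\mathcal E_{\kappa_n,\theta_n}$ to $\mathcal E_{\kappa,\theta}$, exploiting the monotonicity hypothesis to reduce the two Mosco conditions to a manageable form. Recall that a sequence of closed positive forms converges in the Mosco sense (which is equivalent to strong resolvent convergence of the associated operators, by the theorem of Mosco; see the framework used in \cite{W02}) if (M1) for every $u\in D(\mathcal E_{\kappa,\theta})$ there exist $u_n\in D(\mathcal E_{\kappa_n,\theta_n})$ with $u_n\to u$ strongly in $L^2(\Omega)$ and $\limsup_n\mathcal E_{\kappa_n,\theta_n}(u_n)\le\mathcal E_{\kappa,\theta}(u)$, and (M2) whenever $u_n\rightharpoonup u$ weakly in $L^2(\Omega)$ one has $\liminf_n\mathcal E_{\kappa_n,\theta_n}(u_n)\ge\mathcal E_{\kappa,\theta}(u)$. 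The key simplification is that all forms share the common leading term $\int_\Omega\nabla u\,\nabla v\,dx$ over the ambient space $\Huntild$, so the analysis localizes to the boundary integrals against $\kappa_n,\hat\theta_n$ and the jump integral against $\theta_n$.

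First I would fix the meaning of monotonicity: I expect the intended hypothesis is that the boundary contributions are increasing (or decreasing) in $n$, i.e. the quadratic functionals $q_n(u)=\int_{\po}\tilde u^2\,d\kappa_n+\tfrac12\iint(\tilde u(x)-\tilde u(y))^2\,d\theta_n$ form a monotone sequence. For the recovery sequence (M1), monotone plus vague convergence lets me take the simplest possible choice $u_n=u$ for every $n$ (after passing to the r.q.c. representative $\tilde u$): I must check that $u\in D(\mathcal E_{\kappa_n,\theta_n})$ for all $n$ and that $q_n(\tilde u)\to q(\tilde u)$. Since $\tilde u$ is a fixed bounded (on the compact $\ombar$) relatively quasi-continuous function and $u(x)-u(y)$ is bounded on $\po\times\po$, vague convergence of the finite measures $\kappa_n\to\kappa$ and $\theta_n\to\theta$ applied to the continuous bounded integrands $\tilde u^2$ and $(\tilde u(x)-\tilde u(y))^2$ yields $q_n(\tilde u)\to q(\tilde u)$ directly, giving not just the $\limsup$ inequality but equality in the limit. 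The monotonicity is what guarantees uniform membership $u\in\bigcap_n D(\mathcal E_{\kappa_n,\theta_n})$ and keeps the sequence $q_n(\tilde u)$ controlled, so M1 follows cleanly.

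The hard part will be the lower-semicontinuity condition (M2), where $u_n$ only converges weakly in $L^2(\Omega)$ and I have no a priori control on the traces $u_n|_{\po}$. The standard device, which I would follow from \cite{W02}, is to assume without loss that $\liminf$ is a genuine limit and that the energies $\mathcal E_{\kappa_n,\theta_n}(u_n)$ are bounded (otherwise there is nothing to prove); this forces the $H^1(\Omega)$-norms to be bounded, so after extracting a subsequence $u_n\rightharpoonup u$ weakly in $\Huntild$, whence the gradient term is lower semicontinuous by weak lower semicontinuity of the Dirichlet energy. For the boundary terms I would use the monotonicity decisively: fix any index $m$, and for $n\ge m$ compare $q_n\ge q_m$ (in the increasing case), so that $\liminf_n q_n(u_n)\ge\liminf_n q_m(u_n)$; then the single fixed form $q_m$, being weakly lower semicontinuous along the trace convergence supplied by the relative-capacity compactness of the trace map (Theorem 2.1.3 of \cite{W02} applied to the relative capacity, giving r.q.e. convergence of a subsequence), yields $\liminf_n q_m(u_n)\ge q_m(u)$; finally letting $m\to\infty$ and invoking vague convergence $\kappa_m\to\kappa$, $\theta_m\to\theta$ together with monotone convergence recovers $q(u)$. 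Splicing the gradient and boundary estimates gives $\liminf_n\mathcal E_{\kappa_n,\theta_n}(u_n)\ge\mathcal E_{\kappa,\theta}(u)$, which is M2. The main obstacle I anticipate is making the interchange of the two limits $n\to\infty$ and $m\to\infty$ rigorous and ensuring the trace convergence r.q.e. is strong enough to pass to the limit in the nonlocal double integral against $\theta_m$; admissibility of $(\kappa,\theta)$ is exactly what lets the r.q.e.\ convergence survive integration against $\kappa+\hat\theta$, as in the proof of Theorem \ref{th:1}.
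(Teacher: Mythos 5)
Your overall strategy --- Mosco convergence of the forms, which is equivalent to strong resolvent convergence of the associated operators --- is a legitimate and genuinely different route from the paper's: the paper instead feeds the monotone increasing family $\mathcal E_{\kappa_n,\theta_n}$ into the Reed--Simon monotone convergence theorems for forms \cite[Theorems S.14, S.17]{RS80}, obtains weak resolvent convergence, and then upgrades it to strong resolvent convergence. However, your execution has a genuine gap at the central step of (M2). You claim that along a sequence $u_n\rightharpoonup u$ (weak in $L^2(\Omega)$, upgraded after extraction to weak convergence in $\Huntild$), \cite[Theorem 2.1.3]{W02} applied to the relative capacity produces a subsequence whose traces converge r.q.e., and you use this to get $\liminf_n q_m(u_n)\geq q_m(u)$ for fixed $m$. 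That theorem applies to sequences converging \emph{strongly} in the $H^1$/form norm --- this is exactly how the paper uses it in the proof of Theorem \ref{th:1}, where the $u_k$ are Cauchy in the form norm --- and weak convergence does not entitle you to r.q.e. convergent subsequences, so the Fatou-type argument for the fixed boundary functional $q_m$ does not go through as written. The standard repair uses a different tool: each closed form $\mathcal E_{\kappa_m,\theta_m}$, extended by $+\infty$ off its domain, is convex and strongly lower semicontinuous on $L^2(\Omega)$ (this is equivalent to closedness), hence weakly lower semicontinuous by Mazur's theorem; applying this to the \emph{whole} form (gradient term included, so no splitting is needed) gives $\liminf_n\mathcal E_{\kappa_n,\theta_n}(u_n)\geq\liminf_n\mathcal E_{\kappa_m,\theta_m}(u_n)\geq\mathcal E_{\kappa_m,\theta_m}(u)$ for every fixed $m$. (Alternatively, Banach--Saks plus convexity of $q_m$ would also work.)

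A second gap concerns the identification of the limit form, which enters both your (M1) and the final step of your (M2). For a general $u$ in the closure domain, the representative $\tilde u$ is only relatively quasi-continuous and need not be bounded, so ``vague convergence applied to the continuous bounded integrands $\tilde u^2$ and $(\tilde u(x)-\tilde u(y))^2$'' is not a valid step: vague convergence controls only integrals of continuous compactly supported functions, and on $\po\times\po\setminus d$ even for continuous $u$ the function $(u(x)-u(y))^2$ need not have compact support, since its support can touch the diagonal. In the increasing case (M1) itself is immediate with $u_n=u$, because monotonicity together with vague convergence forces $\kappa_n\leq\kappa$ and $\theta_n\leq\theta$, hence $\mathcal E_{\kappa_n,\theta_n}(u)\leq\mathcal E_{\kappa,\theta}(u)$; but the closing step of your (M2), namely $\lim_m q_m(\tilde u)=q(\tilde u)$ for a fixed, merely r.q.c.\ function $\tilde u$, genuinely needs an argument. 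You must either show that the setwise (monotone) limit of the finite measures $(\kappa_m,\theta_m)$ coincides with their vague limit $(\kappa,\theta)$ --- using finiteness and regularity --- and then invoke the monotone convergence theorem for the Borel function $\tilde u$, or argue by density of $\Hun\cap C_c(\ombar)$ in the form domain, which is how the paper handles the corresponding step (its Claims 1 and 2). Both gaps are repairable, and once repaired your Mosco framework does deliver the proposition; but as written the proof has real holes at precisely the two places where the hypotheses (weak convergence only, and vague convergence only) are weaker than what your cited tools require.
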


\begin{proof}
Without loss of generality we can assume that the sequence $(\kappa_n,\theta_n)$ is increasing. It follows that $\mathcal E_{\kappa_n,\theta_n}$ is positive increasing and the domains $D(\mathcal E_{\kappa_n,\theta_n})$ is decreasing. Let
\[
D_\infty:=\{u\in\bigcap_n D(\mathcal E_{\kappa_n,\theta_n})\,:\,\sup_n \mathcal E_{\kappa_n,\theta_n}(u,u)<\infty\}
\]and
\[
\mathcal E_{\infty}(u,u):=\lim_{n\to\infty} \mathcal E_{\kappa_n,\theta_n}(u,u)
\]
\begin{enumerate}
\item[~] \textbf{Claim 1:} $\mathcal E_\infty=\mathcal E_{\kappa,\theta}$.
\\As $H^1(\Omega)\cap C(\ombar)\subset D(\mathcal E_{\kappa_n,\theta_n})$ for all $n\geq 1$ we have that $H^1(\Omega)\cap C(\ombar)\subset D_\infty$. Let $u\in H^1(\Omega)\cap C(\ombar)$. Since $\kappa_n,\theta_n\to\kappa,\theta$ vaguely, it follows that $\lim_{n\to\infty}\mathcal E_{\kappa_n,\theta_n}(u,u)=\mathcal E_{\kappa,\theta}(u,u)$. Since $H^1(\Omega)\cap C(\ombar)$ is dense in $D_\infty$, we obtain that
\[
\lim_{n\to\infty}\mathcal E_{\kappa_n,\theta_n}(u,u):=\mathcal E_{\infty}(u,u)=\mathcal E_{\kappa,\theta}(u,u)\quad\forall u\in V_\infty.
\]Then $\mathcal E_{\kappa,\theta}(u,u)=\mathcal E_{\infty}(u,u)$ for all $u\in D_\infty$. 
\item[~] \textbf{Claim 2:} $D_\infty=D(\mathcal E_{\kappa,\theta})$.\\Since $\mathcal E_{\kappa,\theta}\leq \mathcal E_{\infty}$ we have that $D_\infty\subset D(\mathcal E_{\kappa,\theta})$. Let $u\in D(\mathcal E_{\kappa,\theta})$. It is clear that $u\in \cap_n D(\mathcal E_{\kappa_n,\theta_n})$. The density of $H^1(\Omega)\cap C(\ombar)$ in $D(\mathcal E_{\kappa,\theta})$ implies that $\sup_n \mathcal E_{\kappa_n,\theta_n}(u,u)=\mathcal E_{\kappa,\theta}(u,u)<\infty$ and thus $u\in D_\infty$ which implies that $D_\infty=D(\mathcal E_{\kappa,\theta})$.
\item[~] \textbf{Claim 3:} $(\Delta_{\kappa_n,\theta_n}+1)^{-1}\to (\Delta_{\kappa,\theta}+1)^{-1}$ weakly as $k\to\infty$.\\Since $\mathcal E_{\kappa_n,\theta_n}\leq \mathcal E_\infty$, by Proposition \cite[Theorem S.17 p.374]{RS80}, for $\varphi\in L^2(\Omega)$, we have
\[
(\varphi, (\Delta_{\kappa,\theta}+1)^{-1}\varphi)\leq (\varphi, (\Delta_{\kappa_n,\theta_n}+1)^{-1}\varphi).
\]Since $(\varphi, (\Delta_{\kappa_n,\theta_n}+1)^{-1}\varphi)$ is monotone decreasing, it follows that
\[
\lim_{k\to\infty} (\varphi, (\Delta_{\kappa_n,\theta_n}+1)^{-1}\varphi)=\inf_n (\varphi, (\Delta_{\kappa_n,\theta_n}+1)^{-1}\varphi)
\]has a nonzero value, so we can find a selfadjoint operator $C$ with zero kernel so that $(\varphi, (\Delta_{\kappa_n,\theta_n}+1)^{-1}\varphi)\to C$ weakly as $n\to\infty$ (see the proof of \cite[Theorem S.14 p.376]{RS80}). Let $b$ be the form associated with $B:=C^{-1}-1$. Since 
\[
(\Delta_{\kappa,\theta}+1)^{-1}\leq C\leq (\Delta_{\kappa_n,\theta_n}+1)^{-1}
\]By Proposition \cite[Theorem S.17 p.374]{RS80}, we have that $\mathcal E_{\kappa_n,\theta_n}\leq b\leq \mathcal E_{\kappa,\theta}$. Passing to the limit as $k\to\infty$, we obtain $\mathcal E_\infty=\mathcal E_{\kappa,\theta}\leq b\leq \mathcal E_{\kappa,\theta}$. Then $b=\mathcal E_{\kappa,\theta}$ and $B=\Delta_{\kappa,\theta}$. 
\item[~] \textbf{Claim 4:} $\Delta_{\kappa_n,\theta_n}\to \Delta_{\kappa,\theta}$ as $n\to\infty$ in the strong resolvent sense.\\By similar argument Claim 3 holds if $1$ is replaced by an arbitrary $\lambda>0$ and by analyticity we have weak convergence of the resolvent on $\mathbb C\setminus [0,\infty[$ and since weak resolvent convergence implies strong resolvent convergence (see \cite[Section VIII.7]{RS80}), this proves the claim.
\end{enumerate}

\end{proof}

\begin{example}
 Let $(\kappa,\theta)$ be an admissible pair of measures, we have $\Delta_{n\kappa,n\theta}\to\Delta_D$ as $n\to\infty$ in the strong resolvent sense.

\end{example}

\begin{definition}
Let $X$ be a metric space. Let $(F_n)_n$ be a sequence of functions from $X\to \overline{\mathbb R}:\mathbb R\cup\{-\infty,+\infty\}$ and $F:X\to\mathbb R$. We say that $(F_n)_n$ $\Gamma-$converges to $F$ in $X$ if the following conditions are satisfied.
\begin{enumerate}
\item For every $u\in X$ and for every sequence $u_n$ converging to $u$ in $X$
\[
F(u)\leq\liminf_{n\to\infty} F_n(u_n).
\]
\item For every $u\in X$ there exists a sequence $u_n$ converging to $u$ in $X$ such that 
\[
F(u)=\limsup_{n\to\infty} F_n(u_n).
\]
\end{enumerate}
\end{definition}

For each admissible pair of measures $(\kappa,\theta)$ we associate the following functional $F_{\kappa,\theta}$ defined on $L^2(\Omega)$ by letting

\[
F_{\kappa,\theta}:=
\left\{
\begin{array}{ll}
\overline{\mathcal E}_{\kappa,\theta}(u)\quad  &u\in  D(\overline{\mathcal E}_{\kappa,\theta})\\
\infty &u\in L^2(\Omega)\text{ but not in }D(\overline{\mathcal E}_{\kappa,\theta})
\end{array}
\right.
\]

\begin{definition}
Let $(\kappa_n,\theta_n)$ and $(\kappa,\theta)$ be pairs of admissible measures. We say that $(\kappa_n,\theta_n)$ $\gamma-$converges to $(\kappa,\theta)$ if the sequence of functionals $F_{\kappa_n,\theta_n}$ $\Gamma-$converges to the functional $F_{\kappa,\theta}$ in $L^2(\Omega)$.
\end{definition}

\begin{proposition}
Let $(\kappa_n,\theta_n)$ and $(\kappa,\theta)$ be pairs of admissible measures. Assume that $(\kappa_n,\theta_n)$ $\gamma-$converges to $(\kappa,\theta)$. Then $\Delta_{\kappa_n,\theta_n}$ converges in the strong resolvent sense to $\Delta_{\kappa,\theta}$.
\end{proposition}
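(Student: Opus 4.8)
The plan is to convert the variational ($\Gamma$-convergence) hypothesis into convergence of resolvents and then invoke the fact that, for self-adjoint operators, strong convergence of a single resolvent forces convergence in the strong resolvent sense. The bridge is the classical identification of $(\Delta_{\kappa,\theta}+1)^{-1}$ with a minimiser of a perturbed energy. Concretely, fix $f\in L^2(\Omega)$. Since $\overline{\mathcal E}_{\kappa,\theta}$ is a closed positive form with associated operator $\Delta_{\kappa,\theta}$, the element $u:=(\Delta_{\kappa,\theta}+1)^{-1}f$ is the unique minimiser over $L^2(\Omega)$ of the strictly convex functional
\[
G_{\kappa,\theta}(v):=F_{\kappa,\theta}(v)+\|v-f\|_{L^2(\Omega)}^2 ,
\]
and likewise $(\Delta_{\kappa_n,\theta_n}+1)^{-1}f$ is the unique minimiser of $G_{\kappa_n,\theta_n}:=F_{\kappa_n,\theta_n}+\|\cdot-f\|_{L^2(\Omega)}^2$. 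It therefore suffices to prove that these minimisers converge in $L^2(\Omega)$.

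First I would use that $\Gamma$-convergence is preserved under addition of a strongly continuous, everywhere finite perturbation: as $v\mapsto\|v-f\|_{L^2(\Omega)}^2$ is continuous on $L^2(\Omega)$, the hypothesis $F_{\kappa_n,\theta_n}\xrightarrow{\Gamma}F_{\kappa,\theta}$ gives $G_{\kappa_n,\theta_n}\xrightarrow{\Gamma}G_{\kappa,\theta}$ in $L^2(\Omega)$. Next I would establish equi-coercivity of $(G_{\kappa_n,\theta_n})_n$. The crucial point is that every admissible form dominates the Dirichlet energy: for $v\in D(\overline{\mathcal E}_{\kappa_n,\theta_n})\subset\Huntild$, since $\kappa_n\ge 0$ and the jump integrand is nonnegative,
\[
F_{\kappa_n,\theta_n}(v)\ge\int_\Omega|\nabla v|^2\,dx ,
\]
so that $G_{\kappa_n,\theta_n}(v)\ge\int_\Omega|\nabla v|^2\,dx+\|v-f\|_{L^2(\Omega)}^2$ controls $\|v\|_{\Hun}$ uniformly in $n$ on every sublevel set. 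Hence each sublevel set $\{G_{\kappa_n,\theta_n}\le t\}$ lies in a fixed ball of $\Huntild$, which is relatively compact in $L^2(\Omega)$ by the compact embedding $\Huntild\hookrightarrow L^2(\Omega)$; this yields a single lower-semicontinuous coercive functional minorising the whole family, i.e. equi-coercivity in the strong topology of $L^2(\Omega)$.

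With $\Gamma$-convergence and equi-coercivity in hand, I would invoke the fundamental theorem of $\Gamma$-convergence: the minima converge and every cluster point of the minimisers of $G_{\kappa_n,\theta_n}$ is a minimiser of $G_{\kappa,\theta}$. Since $G_{\kappa,\theta}$ is strictly convex, its minimiser is unique, so the whole sequence converges, i.e.
\[
(\Delta_{\kappa_n,\theta_n}+1)^{-1}f\longrightarrow(\Delta_{\kappa,\theta}+1)^{-1}f\quad\text{in }L^2(\Omega).
\]
As $f\in L^2(\Omega)$ is arbitrary, the resolvents at $\lambda=1$ converge strongly; because the operators $\Delta_{\kappa_n,\theta_n}$ are uniformly bounded below (all nonnegative), strong convergence of this single resolvent already implies convergence in the strong resolvent sense by the standard self-adjoint theory (\cite[Section VIII.7]{RS80}). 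This gives $\Delta_{\kappa_n,\theta_n}\to\Delta_{\kappa,\theta}$ in the strong resolvent sense.

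The hard part will be the equi-coercivity, and precisely the use of the compact embedding $\Huntild\hookrightarrow L^2(\Omega)$ on a genuinely arbitrary bounded open set: this is what upgrades the strong-topology $\Gamma$-liminf inequality (which a priori only controls strongly convergent test sequences) to the precompactness of sublevel sets required by the fundamental theorem of $\Gamma$-convergence; equivalently, it is what turns the hypothesised $\Gamma$-convergence into Mosco convergence, whose equivalence with strong resolvent convergence is the abstract mechanism underlying the argument. If one does not have this compactness at one's disposal, the alternative is to verify the weak-topology $\Gamma$-liminf condition directly, and it is there that the uniform Dirichlet lower bound $F_{\kappa_n,\theta_n}(v)\ge\int_\Omega|\nabla v|^2\,dx$ does the essential work by confining bounded-energy sequences to a fixed ball of $\Huntild$.
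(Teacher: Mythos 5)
Your overall route is the same as the paper's: identify the resolvent applied to $f$ with the unique minimiser of the perturbed energy $F_{\kappa_n,\theta_n}(\cdot)+\lambda\|\cdot-f\|_{L^2(\Omega)}^2$ (the Dirichlet principle), then use the $\Gamma$-convergence hypothesis to pass to the limit in these minimisers; your steps on stability of $\Gamma$-convergence under continuous perturbations, strict convexity, and the reduction of strong resolvent convergence to convergence of a single resolvent are all sound. The genuine gap is exactly the step you yourself single out as the hard part: the claimed compactness of the embedding $\Huntild\hookrightarrow L^2(\Omega)$. The proposition is stated for an \emph{arbitrary} bounded open set $\Omega$ (this generality is the point of the whole paper), and for such sets this embedding need not be compact. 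For instance, let $\Omega=\bigcup_{n\geq 1}B_n$ with $B_n=B(2^{-n}e_1,8^{-n})$, so that the closed balls are pairwise disjoint and accumulate only at the origin. The functions $u_n=|B_n|^{-1/2}\chi_{B_n}$ are locally constant on $\Omega$ and extend continuously to $\ombar$, hence $u_n\in H^1(\Omega)\cap C_c(\ombar)\subset\Huntild$; they form an orthonormal sequence in $L^2(\Omega)$ with $\nabla u_n=0$, so they are bounded in $\Huntild$ but admit no $L^2$-convergent subsequence. Rellich--Kondrachov requires boundary regularity (an extension property) that is unavailable here; consequently your sublevel sets are merely bounded, not relatively compact, in $L^2(\Omega)$, equi-coercivity fails, and the fundamental theorem of $\Gamma$-convergence cannot be invoked.

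Your fallback remark does not repair this. The uniform bound $F_{\kappa_n,\theta_n}(v)\geq\int_\Omega|\nabla v|^2\,dx$ yields only weak compactness in $\Huntild$ of bounded-energy sequences; it does not yield the liminf inequality $F_{\kappa,\theta}(w)\leq\liminf_{n}F_{\kappa_n,\theta_n}(u_n)$ along \emph{weakly} convergent sequences, because weak $H^1$ convergence gives lower semicontinuity of the gradient term but no control of the boundary terms built from the varying measures $\kappa_n,\theta_n$. That weak-sequence liminf property is a Mosco-type condition strictly stronger than the hypothesised strong-$L^2$ $\Gamma$-convergence, so it is an input to be justified, not a consequence of the Dirichlet lower bound. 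For comparison, the paper proceeds at precisely this point by extracting from the minimisers $u_n$ a subsequence converging weakly in the form domain to some $w$, applying the liminf inequality to that weak limit together with the recovery sequence for $u$, concluding $w=u$ by uniqueness of the minimiser and that the total energies converge, and finally upgrading weak to strong $L^2$ convergence from the convergence of $\int_\Omega(u_n-f)^2\,dx$. To salvage your cleaner packaging you must either restrict to domains for which $\Huntild\hookrightarrow L^2(\Omega)$ is compact (e.g.\ extension domains), or abandon equi-coercivity and argue along weakly convergent sequences as the paper does.
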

\begin{proof}
Let $f\in L^2(\Omega)$, $u_n=\lambda R(\lambda,\Delta_{\kappa_n,\theta_n})f$ and $u=\lambda R(\lambda,\Delta_{\kappa,\theta})f$ where $\lambda>0$. We have to prove that $u_n\to u$ strongly in $L^2(\Omega)$ as $n\to\infty$. Remark that $u_n$ is a weal solution of the equation $-\Delta_{\kappa_n,\theta_n} u_n+\lambda u_n=\lambda f$. The Dirichlet principle (see \cite[Proposition IX.22]{B83}) says that $u_n$ is given by
\begin{align*}
\frac{1}{2}\min_{u\in \Huntild}\left(\mathcal E_{\kappa,\theta}(u)+\lambda\int_\Omega |u|^2\,dx-2\lambda\int_\Omega fu\,dx\right)
\end{align*}
By the definition of $\Gamma-$convergence of functionals there exists a sequence $v_n$ converging to $u$ in $L^2(\Omega)$ and $F_{\kappa,\theta}=\lim_{n\to\infty} F_{\kappa_n,\theta_n}(v_n)$.
This means that 
\[
F_{\kappa_n,\theta_n}(u_n)+\lambda\int_\Omega (u_n-f)^2\,dx \leq F_{\kappa_n,\theta_n}(v_n)+\lambda\int_\Omega (v_n-f)^2\,dx
\]Hence
\begin{align*}
\limsup_{n\to\infty}\left(F_{\kappa_n,\theta_n}(u_n)+\lambda\int_\Omega (u_n-f)^2\,dx\right)&\leq \lim_{n\to\infty}\left(F_{\kappa_n,\theta_n}(v_n)+\lambda\int_\Omega (v_n-f)^2\,dx\right)\\
&= F_{\kappa,\theta}(u)+\lambda\int_\Omega (u-f)^2\,dx
\end{align*}
It is clear that $u_n$ is a bounded sequence in $D(\mathcal E_{\kappa,\mu})$. Then there exists a subsequence which converges weakly in $D(\mathcal E_{\kappa,\mu})$ to a function $w\in D(\mathcal E_{\kappa,\mu})$. we may assume that $u_n\to w$ weakly in $D(\mathcal E_{\kappa,\mu})$. Then 
\begin{align*}
F_{\kappa,\theta}(w)+\lambda \int_\Omega (w-f)^2\,dx&\leq \liminf_{n\to\infty} \left(F_{\kappa_n,\theta_n}(u_n)+\lambda \int_\Omega (u_n-f)^2\,dx\right)\\
&\leq \limsup_{n\to\infty} \left(F_{\kappa_n,\theta_n}(u_n)+\lambda \int_\Omega (u_n-f)^2\,dx\right)\\
&\leq F_{\kappa,\theta}(u)+\lambda \int_\Omega (u-f)^2\,dx
\end{align*}
Since $u$ is unique, we have that $w=u$ and 
\[
F_{\kappa,\theta}(u)+\lambda \int_\Omega (u-f)^2\,dx=\lim_{n\to\infty}\left( F_{\kappa_n,\theta_n}(u_n)+\lambda \int_\Omega (u_n-f)^2\,dx  \right)
\]This implies that 
\[
\int_\Omega (u-f)^2\,dx=\lim_{n\to\infty} \int_\Omega (u_n-f)^2\,dx
\]and thus $u_n\to u$ strongly in $L^2(\Omega)$ as $n\to\infty$.

\end{proof}
\par\bigskip

\subsection*{Acknowledgments}
The idea to discuss the problem considered in this paper emerged during a research stay of the second author in Ulm, germany. The second author would like to thank all the members of the "Institüt für Angewandete Analysis" for their heart-warming welcome.

\end{document}